\documentclass[review,hidelinks,onefignum,onetabnum]{siamart250211}

\nolinenumbers

\usepackage{lipsum}
\usepackage{amsfonts}
\usepackage{graphicx}
\usepackage{epstopdf}
\usepackage{comment}

\usepackage{amssymb,amsfonts,amsmath,latexsym,dsfont}
\usepackage{mathtools}
\usepackage{mathrsfs}
\usepackage[algo2e,ruled,vlined]{algorithm2e}
\usepackage{setspace}
\usepackage{subfigure}
\usepackage{fancyhdr}
\usepackage{wrapfig}

\usepackage{booktabs}

\usepackage[normalem]{ulem}

\usepackage{bm}
\renewcommand{\vec}[1]{{\bm #1}}
\usepackage{boxedminipage}
\usepackage{pgf,pgfarrows} 
\usepackage{subfigure} 

\newcommand{\FrameboxA}[2][]{#2}
\newcommand{\Framebox}[1][]{\FrameboxA}

\DeclareMathOperator*{\argmin}{arg\,min}

\newcommand{\norme}[1]{\left\lVert#1\right\rVert}

\newcommand{\ii}{\mathrm{i}}

\newcommand{\grad}{\nabla}

\newcommand{\bfu}{{\bf u}}

\renewcommand{\theequation}{\arabic{section}.\arabic{equation}}

\newcommand{\R}{\ensuremath{\mathds{R}}}

\definecolor{darkblue}{rgb}{0.08, 0.15, 0.48}

\newtheorem{rem}[theorem]{Remark}
\newtheorem{thm}[theorem]{Theorem}

\graphicspath{{images/}{./}}

\ifpdf
  \DeclareGraphicsExtensions{.eps,.pdf,.png,.jpg}
\else
  \DeclareGraphicsExtensions{.eps}
\fi

\headers{Dispersion Correction for Elastic Helmholtz}{P.H. Cocquet, A. Tonnoir and R. Yovel}

\title{Asymptotic dispersion correction for the isotropic elastic Helmholtz equation discretized with a MAC scheme
\thanks{Corresponding author: . \funding{RY is supported by the Israel Science Foundation (grant No. 656/23), the Ariane de Rothschild scholarship and by Kreitman High-tech scholarship. The authors also thank the Lynn and William Frankel Center for Computer Science at BGU.}}}

\author{Pierre-Henri Cocquet\thanks{Laboratoire des Sciences pour l'Ing{\'e}nieur Appliqu{\'e}es {\`a} la M{\'e}canique et au g{\'e}nie {\'E}lectrique (SIAME), Universit{\'e} de Pau et des Pays de l'Adour, Pau, France. \email{pierre-henri.cocquet@univ-pau.fr}} \and Antoine Tonnoir\thanks{Normandie University, INSA de Rouen Normandie, LMI, Rouen, France. \email{antoine.tonnoir@insa-rouen.fr}}
\and Rachel Yovel\thanks{Institute for Interdisciplinary Computational Science, the Stein Faculty of Computer and Information Science, Ben-Gurion University of the Negev, Beer-Sheva, Israel.
  \email{yovelr@bgu.ac.il}}}

\usepackage{amsopn}

\begin{document}

\maketitle

\begin{abstract}
The numerical simulation of time-harmonic wave propagation in elastic media plays an important role in applications such as geophysics and non-destructive testing.
Accurate discretization of the elastic Helmholtz equation at high frequencies is challenging due to numerical dispersion and pollution effects.
In this work, we develop an asymptotic dispersion correction for a Marker-And-Cell (MAC) discretization of the isotropic elastic Helmholtz equation.
We characterize the discrete dispersion relation of the scheme and determine the leading-order term in the dispersion error in both two and three spatial dimensions.
Based on this analysis, we derive a correction that is asymptotically optimal in the limit of vanishing mesh size.
The proposed approach improves the agreement between the discrete and continuous wave propagation properties while preserving the structure of the underlying discretization.
We also establish a connection between the factorization of the dispersion relation and the structure of the grad-div operator symbol, providing additional insight into the algebraic structure of the elastic problem. Numerical experiments finally demonstrate a substantial reduction of relative errors and confirm the effectiveness of the proposed correction.
We further provide numerical evidence that the corrected discretization improves the convergence behavior of multigrid solvers.
\end{abstract}

\begin{keywords}
Elastic wave modeling, Helmholtz equation, numerical dispersion, Finite differences.
\end{keywords}

\begin{MSCcodes}
65N06, 35J05, 65F99, 81U30 
\end{MSCcodes}

\section{Introduction}\label{sec:intro}

The general motivation of this work is the development of efficient numerical tools for simulating time-harmonic wave propagation in elastic media. 
More precisely, we are interested in time-harmonic waves propagation in (isotropic) elastic media and emphasize that the frequency-domain formulation is particularly attractive in applications involving imaging and inversion, where the response of the medium must be computed repeatedly for many frequencies.
This configuration indeed occurs in several contexts such as geophysics \cite{virieux2011review,operto2009finite} or Non Destructive Testing \cite{baronian2018linear,blitz1995ultrasonic}.

Solving efficiently and accurately time-harmonic waves equation is known to be a difficult task and many works have been dedicated to the Helmholtz equation which serves as a standard model. In particular, a central difficulty is the pollution effect \cite{ihlenburg1995dispersion,babuska1997pollution,gander2026fourier} which implies that even for a fixed number of points per wavelength, the error grows as the frequency increases. Therefore, handling the pollution effect requires considering a large number of degree of freedom so that the resulting linear systems become increasingly difficult to solve as the frequency grows. Several remedies have been studied for the Helmholtz equation, for instance high order method \cite{spence2023simple} or Boundary Element Method \cite{galkowski2023does}.

Among the various approaches proposed to mitigate pollution, dispersion correction methods have attracted considerable attention because they directly target the mismatch between the continuous and discrete propagation of waves. 
By modifying the discretization so as to improve its dispersion relation, these methods can significantly reduce phase errors while preserving the overall structure of the numerical scheme. 
The vast majority of works deal with the Helmholtz equation and the most common method (see \cite{stolk2014multigrid,wu2018optimal,liu2015optimized,jo1996optimal,wu2021new,cheng2017dispersion,chen2012dispersion,wu2014dispersion}) is to consider finite differences (FD) stencil with free parameters that are next determined numerically by minimizing the dispersion error. More recently, some asymptotically optimal dispersion corrections have been proposed for the Helmholtz equation in \cite{cocquet2021closed,cocquet2024asymptotic} and for the Maxwell equation in \cite{cocquet2025improving}. This method introduces, in a given FD scheme, some free parameters through consistent perturbation of some physical parameter (e.g. the wavenumber, the angular frequency). Assuming a large enough number of grid point per wavelength is considered, the perturbation that minimizes the dispersion error is then determined in close-form. The advantages of this dispersion minimizing technique are twofold: first, it can be applied to FD scheme that do not have any free parameters, and second, it does not rely on any numerical optimization.

In this work, we are interested in dispersion correction for the vectorial elastodynamics equation (a.k.a the elastic Helmholtz equation).
While some works suggests dispersion-minimizing discretizations for elastic wave propagation problems, see \cite{levander1988fourth, gosselin20143d,harari2011stabilized},
to the best of our knowledge there exists no works suggesting an asymptotic dispersion correction via modified physical parameters.
Our purpose is to extend the method of \emph{asymptotic dispersion correction} (see \cite{cocquet2024asymptotic,cocquet2025improving})
in this setting which is more challenging due to the coexistence of pressure and shear waves with different propagation velocities.
More precisely, 
we propose an asymptotic dispersion correction for the isotropic elastic Helmholtz equation discretized with a Marker And Cell (MAC) scheme. 
We characterize the discrete dispersion relation as well as the order of the dispersion error. 
We derive a dispersion correction which is optimal in the asymptotic case where $h\to 0$, in two dimensions and three dimensions. Our theoretical results are supported by numerical evidence demonstrating the effectiveness of the dispersion correction. We also give numerical evidence for the improvement of multigrid convergence when using our dispersion correction. 
Finally, we establish a connection between the factorization of the discrete dispersion correction and the structure of the grad-div symbol.

The organization of the paper is as follows: 
in Section \ref{sec:cont_eq} we characterize the symbol of the continuous elastic Helmholtz equation in isotropic media. 
In Section \ref{sec:mac} we introduce the MAC finite difference scheme and analyze its dispersion properties. 
Section \ref{sec:asymptotic} is devoted to the derivation of the asymptotic dispersion correction for the MAC scheme while Section \ref{sec:extension} provides some extensions and limits of the proposed approach. In Section \ref{sec:Numerics}, we give numerical experiments to show the effect of the dispersion correction on the relative error and on the convergence behavior of a multigrid algorithm. This paper then ends with some conclusions and future works.

\section{Dispersion analysis at the continuous level}
\label{sec:cont_eq}

Let $\Omega$ be a bounded open set of $\R^d$.
We consider the time-harmonic isotropic elastic wave equation:
(see \cite[Eq. (2.2)]{yovel2024lfa})
\begin{equation}\label{eq:Elastic_Helmholtz_inhomogeneous_Lame}
-\textbf{div}\left\{\mu \left(\nabla \vec{u}+ \nabla\vec{u}^T \right)\right\} - \nabla \lambda \mathrm{div}\,\vec{u} 
- \rho(x)\omega^2 \vec{u} = \vec{f}\ \text{in}\ \Omega,
\end{equation}
where $\vec{u}$ is the displacement field written as $ \vec{u}= (u^x, u^y)$ for $d=2$ and $\vec{u}= (u^x, u^y,u^z)$ for $d=3$, $\lambda,\mu$ are the Lam\'e coefficients and $\vec{f}$ is a given source term. 
Another common formulation of the isotropic elastic Helmholtz equation is
\begin{equation}\label{eq:elastic_Helm_gradiv_formulation}
    -\textbf{div}\left(\mu \grad\right)\vec{u} - \grad(\lambda+\mu)\text{div} \vec{u} - \rho(x)\omega^2 \vec{u}
 = \vec{f}
 \end{equation}
 which is equivalent to \eqref{eq:Elastic_Helmholtz_inhomogeneous_Lame} in the case of constant $\mu$. We will consider for the dispersion analysis an homogeneous medium so that $\lambda$, $\mu$ and $\rho$ are supposed to be constant positive real numbers and the source term $\vec{f} = 0$. In that case, using the identity 
$$
\textbf{div}\left(\nabla \vec{u}+ \nabla\vec{u}^T \right) = \vec{\Delta}\vec{u} + \nabla \mathrm{div}\,\vec{u},
$$
we can rewrite \eqref{eq:Elastic_Helmholtz_inhomogeneous_Lame} or, equivelntly, \eqref{eq:elastic_Helm_gradiv_formulation} as (see \cite[Eq. (2.3)]{yovel2024lfa})
\begin{equation}\label{eq:Elastic_Helmholtz_homogeneous_mu}
-\mu\vec{\Delta}\, \vec{u} - \nabla \left(\lambda +\mu\right)\mathrm{div}\,\vec{u} 
- \rho \omega^2 \vec{u} = \vec{0},
\end{equation}
where $\vec{\Delta}$ is the vector Laplacian which is simply defined as the scalar Laplacian applied component-wise, hence $\vec{\Delta}\vec{u} = \left(\Delta u^x,\Delta u^y \right)$ in two dimensions and $\vec{\Delta}\vec{u} = \left(\Delta u^x,\Delta u^y,\Delta u^z \right)$ in three dimensions. 


\begin{rem} 
The Poisson ratio, representing the stress-strain relationship in the media, is given by 
$$
\nu = \frac{\lambda}{2(\lambda+\mu)},
$$
and, when $\nu\to \frac{1}{2}$ the material is nearly incompressible.
\end{rem}


To perform the dispersion analysis, we look for a plane-wave solution to \eqref{eq:Elastic_Helmholtz_homogeneous_mu} hence we want to find a non-trivial solution of the form 
$$
\vec{u} = \vec{X} \mathrm{e}^{\ii \vec{\xi}\cdot \vec{x}}
$$
where $\vec{X}\in\mathbb{C}^d$ is the polarization vector. Inserting the plane-wave into \eqref{eq:Elastic_Helmholtz_homogeneous_mu} yields 
$$
\sigma\left( \vec{\xi}\right)\vec{X} = \vec{0},
$$
with the matrix $\sigma(\vec{\xi})$
\begin{equation}\label{eq:symb-cont}
\sigma\left(\vec{\xi}\right) =  \mu \norme{\vec{\xi}}^2 I_d + (\lambda + \mu) \vec{\xi}\vec{\xi}^T - \rho\omega^2 I_d
\end{equation}
being the symbol associated to the elastic Helmholtz operator (which can be obtained directly by taking the Fourier transform of \eqref{eq:Elastic_Helmholtz_homogeneous_mu}). The continuous dispersion relation is defined by the set of value $\vec{\xi}$ for which we have non trivial plane-wave solutions, that is to say: 
$$
D_c := \left\{\vec{\xi} \in \mathbb{R}^d \ |\ \mathrm{det}\, \sigma\left( \vec{\xi}\right) = 0 \right\}, \quad d\in \{2,3\}. 
$$ 
We recall the following formula 
\begin{equation}\label{eq:Formula_det}
\mathrm{det}\left(A + \vec{c}\vec{d}^T \right) = \mathrm{det}(A)\left(1+\vec{d}^T A^{-1}\vec{c} \right),
\end{equation}
which holds for any invertible matrix $A\in \mathbb{C}^{N\times N}$ and any vectors $\vec{c},\ \vec{d}\in \mathbb{C}^{N\times 1}$. 
Using \eqref{eq:Formula_det} with $A = \mu \norme{\vec{\xi}}^2 I_d - \rho\omega^2 I_d$ and $\vec{c}=\vec{d}=\sqrt{\lambda+\mu}\,\vec{\xi}$  then gives: 
$$
\mathrm{det}\, \sigma\left( \vec{\xi}\right) = \left( 
 \mu \norme{\vec{\xi}}^2 -  \rho\omega^2
\right)^{d-1}\left((\lambda+2\mu)\norme{\vec{\xi}}^2-\rho \omega^2 \right).
$$
As a consequence, the dispersion relation is the reunion of two circles (for $d=2$) and 2 spheres (for $d=3$) with radii
$$
R_S = \omega\sqrt{\frac{\rho}{\mu}},\quad  R_P =  \omega \sqrt{\frac{\rho}{\lambda+2\mu}}. 
$$
Let us also introduce the scalar wavenumber $k$ defined as 
$$
\forall \vec{\xi}\in D_c:\ k = \norme{\vec{\xi}}.
$$
For the elastic-Helmholtz equation, one then has two scalar wavenumbers given by  
$$
k_S =  \omega\sqrt{\frac{\rho}{\mu}}\ \text{and}\ k_P = \omega\sqrt{\frac{\rho}{\lambda+2\mu}},
$$
where $k_S$ is the wavenumber associated to shear waves (whose polarization vector is orthogonal to $\vec{\xi}$) and $k_P$ to pressure waves (whose polarization vector is colinear to $\vec{\xi}$).

\section{MAC finite-differences scheme and its properties}
\label{sec:mac}

This section is dedicated to present the MAC scheme, some of its properties and its dispersion analysis. 

\subsection*{Two-dimensional MAC scheme}
We follow the presentation of \cite{rui2018locking} to present the MAC scheme although we slightly modify their notations. We will first detail the $2$D case and consider the case of the whole plane $\mathbb{R}^2$ for the dispersion analysis. In $2$D, we have $\vec{u}=\left(u^x,u^y\right)$ so that the isotropic elastodynamic equations are:
\begin{equation}\label{eq:split}
    \left\{ \begin{array}{l}
         -\mu \Delta u^x - (\lambda + \mu)\partial_{xx} u^x - \rho \omega^2 u^x - (\lambda + \mu)\partial_x \partial_y u^y = f^x, \\[5pt]
         -\mu \Delta u^y - (\lambda + \mu)\partial_{yy} u^y - \rho \omega^2 u^y - (\lambda + \mu)\partial_y \partial_x u^x = f^y,
    \end{array} \right. \quad \text{in}\ \Omega.
\end{equation}
The MAC scheme considers two grids defined by
\begin{equation}
\begin{split}
    & \mathcal{G}_{1} = \left\{ (x_{i+\frac{1}{2}},y_j) = \left(\left(i+\frac{1}{2}\right)h_x,jh_y\right), \;\; (i,j) \in \mathbb{Z}^2\right\}, \\
    \text{and} \quad & \mathcal{G}_{2} = \left\{ (x_{i},y_{j+\frac{1}{2}}) = \left(ih_x,\left(j+\frac{1}{2}\right)h_y\right), \;\; (i,j) \in \mathbb{Z}^2\right\},
\end{split}
\end{equation}
so that $u^x$ is approximated on $\mathcal{G}_1$ and $u^y$ on $\mathcal{G}_2$. To facilitate the notations, we introduce the following backward and forward stencils
\begin{eqnarray*}
\delta_x^{+} v_{i,j} &=& \frac{1}{h_x}\left(v_{i+1,j}-v_{i,j}\right),\ \delta_x^{-} v_{i,j} =\frac{1}{h_x}\left(v_{i,j}-v_{i-1,j}\right),\ 
\\
\delta_y^{+} v_{i,j} &=& \frac{1}{h_y}\left(v_{i,j+1}-v_{i,j}\right),\ \delta_y^{-} v_{i,j} = \frac{1}{h_y}\left(v_{i,j}-v_{i,j-1}\right). 
\end{eqnarray*}
With these notations, the MAC scheme can be written in compact form as 
\begin{equation}\label{eq:MAC_FD_Grid_AT}
\begin{array}{rcl}
-\mu \Delta_h u^x_{i+\frac{1}{2},j} -(\lambda+\mu) \left(\delta_x^+\delta_x^- u^x_{i+\frac{1}{2},j} + \delta_x^{+}\delta_y^{-}u^y_{i,j+\frac{1}{2}}\right)-\rho\omega^2 u^x_{i+\frac{1}{2},j}
&=& f_{i+\frac{1}{2},j}^x,
\\[0.5em]
-\mu \Delta_h u^y_{i,j+\frac{1}{2}} 
-(\lambda+\mu) \left(\delta_y^+\delta_y^- u^y_{i,j+\frac{1}{2}} + \delta_x^{-}\delta_y^{+}u^x_{i+\frac{1}{2},j}\right)-\rho\omega^2 u^y_{i,j+\frac{1}{2}}
&=& f^y_{i,j+\frac{1}{2}},
\end{array}
\end{equation}
where the discrete Laplacian is $
\Delta_h =\left(\delta_x^+\delta_x^- +\delta_y^-\delta_y^- \right)$.
\begin{rem}
We could have also considered approximating $u^x$ on $\mathcal{G}_2$ and $u^y$ on $\mathcal{G}_1$. In that case, the MAC scheme would become
\begin{equation}\label{eq:MAC_FD_Grid_2}
\begin{array}{rcl}
-\mu \Delta_h u^x_{i,j+\frac{1}{2}} -(\lambda+\mu) \left(\delta_x^+\delta_x^- u^x_{i,j+\frac{1}{2}} + \delta_x^{-}\delta_y^{+}u^y_{i+\frac{1}{2},j}\right)-\rho\omega^2 u^x_{i,j+\frac{1}{2}}
&=& f_{i,j+\frac{1}{2}}^x,
\\[0.5em]
-\mu \Delta_h u^y_{i+\frac{1}{2},j} 
\nonumber -(\lambda+\mu) \left(\delta_y^+\delta_y^- u^y_{i+\frac{1}{2},j} + \delta_x^{+}\delta_y^{-}u^x_{i,j+\frac{1}{2}}\right)-\rho\omega^2 u^y_{i+\frac{1}{2},j}
&=& f^y_{i+\frac{1}{2},j}.
\end{array}
\end{equation}
\end{rem}
To clarify the notations above, we give below the MAC scheme \eqref{eq:MAC_FD_Grid_AT} in expanded form:
\begin{equation}\label{eq:MAC_expanded_1st_eq}
\begin{split}
    & \mu \dfrac{u^x_{i+\frac{3}{2},j} + u^x_{i-\frac{1}{2},j} + u^x_{i+\frac{1}{2},j+1} + u^x_{i+\frac{1}{2},j-1} - 4 u^x_{i+\frac{1}{2},j}}{h_x^2}\\
    + \; &  (\lambda + \mu) \dfrac{u^x_{i+\frac{3}{2},j} + u^x_{i-\frac{1}{2},j} - 2u^x_{i+\frac{1}{2},j} }{h_x^2} + \rho \omega^2 u^x_{i+\frac{1}{2},j} \\
    + \; & (\lambda + \mu)  \dfrac{u^y_{i+1,j+\frac{1}{2}} + u^y_{i,j-\frac{1}{2}} - u^y_{i+1,j-1/2} - u^y_{i,j+\frac{1}{2}}}{h_x h_y} = f^x_{i+\frac{1}{2},j},
\end{split}    
\end{equation}
for the first equation in \eqref{eq:split}, and:
\begin{equation}\label{eq:MAC_expanded_2nd_eq}
\begin{split}
    & \mu \dfrac{u^y_{i+1,j+\frac{1}{2}} + u^y_{i-1,j+\frac{1}{2}} + u^y_{i,j+\frac{3}{2}} + u^y_{i,j-\frac{1}{2}} - 4 u^y_{i,j+\frac{1}{2}}}{h_y^2}\\
    + \; &  (\lambda + \mu) \dfrac{u^y_{i,j+\frac{3}{2}} + u^y_{i,j-\frac{1}{2}} - 2u^y_{i,j+\frac{1}{2}} }{h_y^2} + \rho \omega^2 u^y_{i,j+\frac{1}{2}} \\
    + \; & (\lambda + \mu) \dfrac{u^x_{i+\frac{1}{2},j+1} + u^x_{i-\frac{1}{2},j} - u^x_{i-\frac{1}{2},j+1} - u^x_{i+\frac{1}{2},j}}{h_xh_y} = f^y_{i,j+\frac{1}{2}},
\end{split}    
\end{equation}
for the second equation in \eqref{eq:split}.

Using Taylor expansions, one can show that the FD scheme \eqref{eq:MAC_expanded_1st_eq}, \eqref{eq:MAC_expanded_2nd_eq} has a second order truncation error.

\subsection*{Three-dimensional MAC scheme}\label{rem:3d_MAC_stencil}
Introducing the notations
$$
\nabla_h^- v_{i,j,k}= \left( 
\begin{array}{c}
\delta_x^-v_{i,j,k} \\ \delta_y^-v_{i,j,k} \\ \delta_z^-v_{i,j,k}
\end{array}
\right),\ \mathrm{div}_h^+ \vec{v_{i,j,k}} = \delta_x^+v^x_{i,j,k} + \delta_y^+v^y_{i,j,k}+\delta_z^+v^z_{i,j,k},
$$
the $3$D MAC scheme is simply given by
$$
\left(-\mu \vec{\Delta}_h -(\lambda+\mu)\nabla_h^-\mathrm{div}_h^+ -\rho\omega^2 \right)
\left( 
\begin{array}{c}
u^x_{i+1/2,j,k} \\ u^y_{i,j+1/2,k} \\ u^z_{i,j,k+1/2}
\end{array}
\right)
= 
\left(\begin{array}{c}
f^x_{i+1/2,j,k} \\ f^y_{i,j+1/2,k} \\ f^z_{i,j,k+1/2}
\end{array} \right)
$$
where $\Delta_h = \mathrm{div}_h^+\nabla_h^-= \mathrm{div}_h^-\nabla_h^+$ is the scalar Laplacian and $\vec{\Delta}_h$ is the vector Laplacian applied component-wise to any vector field. Using Taylor expansions show that it also has a second-order truncation error. 

\subsection*{Discrete Helmholtz decomposition}

In homogeneous media, it is classical (see e.g. \cite{bramble2008note}, \cite[Section 1.5.1]{martin2006multiple}) to decompose the displacement field $\vec{u}$ satisfying \eqref{eq:Elastic_Helmholtz_homogeneous_mu} as 
\begin{equation}\label{eq:Helmholtz_decomposition_continuous}
\vec{u} =  \vec{\varphi}  -\nabla \psi,
\end{equation}
where $\mathrm{div}\, \vec{\varphi}=0$ and $\nabla\times \nabla \psi=0$. In addition, $\psi$ is associated to the $P$-waves while $\vec{\varphi}$ is associated to the $S$-waves since 
$$
-\Delta \psi - k_P^2 \psi = 0,\qquad -\vec{\Delta}\vec{\varphi}-k_S^2\vec{\varphi} = 0.
$$
The decomposition \eqref{eq:Helmholtz_decomposition_continuous} is known as the (Hodge)-Helmholtz decomposition and we are going to show that it also holds for the displacement field discretized using the MAC scheme. 

We will show the result in the $3$D-case, for the $2$D case, see Remark \ref{rem:2D-Decomp-Helm-Discr}. 
Let $\vec{u}_h=\left(u^x_{i+\frac{1}{2},j,k} , u^y_{i,j+\frac{1}{2},k} , u^z_{i,j,k+\frac{1}{2}} \right)^T$ be the discrete displacement field that satisfies 
\begin{equation}\label{eq:Discrete_Elast_Helmholtz_3d}
\left(-\mu \vec{\Delta}_h -(\lambda+\mu)\nabla_h^-\mathrm{div}_h^+ -\rho\omega^2 \right)
\vec{u}_h
= 
\vec{f}_h. 
\end{equation}
From \cite[Theorem 3.1]{cocquet2025improving}, we have the following formulas (a discrete De Rham complex)
\begin{eqnarray}\label{eq:discrete_de_Rham}
&& \mathrm{div}_h^{\pm} \nabla^{\pm}_h\times =0,\ \nabla_h^\pm\times \nabla_h^\pm =0,\ \mathrm{div}_h^{\pm} \nabla^{\mp}_h =\Delta_h,
\\
\nonumber && \vec{\Delta}_h = \nabla_h^{+}\mathrm{div}^-_h - \nabla_h^-\times \nabla_h^+\times,\ \vec{\Delta}_h = \nabla_h^{-}\mathrm{div}^+_h - \nabla_h^+\times \nabla_h^-\times.
\end{eqnarray}
The finite differences operators above are defined at each grid point as 
\begin{eqnarray*}
&& \mathrm{div}^\pm_h \vec{\Phi} = \delta_x^\pm \Phi^x_{i,j,k} + \delta_y^\pm\Phi^y_{i,j,k}+\delta_z^\pm\Phi^z_{i,j,k},
\\
&& \nabla_h^\pm\times \vec{\Phi} = [\vec{e}_x\times ]\delta_x^\pm \vec{\Phi}_{i,j,k} + [\vec{e}_y\times ]\delta_y^\pm \vec{\Phi}_{i,j,k} +  [\vec{e}_z\times ]\delta_z^\pm \vec{\Phi}_{i,j,k},
\end{eqnarray*}
where $[\vec{X}\times]\in \R^{3\times 3}$ is defined through the cross product as $[\vec{X}\times] \vec{Y}=\vec{X}\times \vec{Y}$ for all $\vec{X},\vec{Y}\in\R^3$ and $\vec{e}_x,\vec{e}_y,\vec{e}_z$ are the vectors of the canonical basis of $\R^3$. We follow \cite{bramble2008note} and consider 
$$
\psi_h=\mathrm{div}_h^+\,\vec{u}_h.
$$
Using \eqref{eq:discrete_de_Rham} and \eqref{eq:Discrete_Elast_Helmholtz_3d}, we get that 
$$
-\mu \nabla_h^-\psi_h+\mu \nabla_h^+\times\nabla_h^-\times \vec{u}_h -(\lambda+\mu)\nabla_h^-\psi_h - \rho\omega^2\vec{u}_h = \vec{f}_h.
$$
Taking the discrete divergence $\mathrm{div}_h^+$ of the previous equation and using once again \eqref{eq:discrete_de_Rham} yield
\begin{equation}\label{eq:Discrete_P_waves}
 - (\lambda+2\mu) \Delta_h \psi_h - \rho\omega^2 \psi_h = \mathrm{div}_h^+\, \vec{f}_h.
\end{equation}
Note that \eqref{eq:Discrete_P_waves} is a discrete Helmholtz equation with wavenumber $k_P$ hence being associated to the $P$-waves. Let $\vec{\Phi}_h$ be defined as 
$$
\vec{\Phi}_h = \vec{u}_h + \alpha \nabla_h^-\psi_h + \beta \vec{f}_h,
$$
for some constants $\alpha,\beta$. 
One has 
$$
\mathrm{div}_h^+\, \vec{\Phi}_h = \psi_h + \alpha \Delta_h\psi_h + \beta \mathrm{div}_h^+\,\vec{f}_h =0,
$$
for 
$$
\alpha =\frac{1}{k_P^2} ,\ \beta = \frac{1}{\rho \omega^2}. 
$$
Taking the discrete gradient $\nabla_h^-$ of \eqref{eq:Discrete_P_waves} shows that each component of $\nabla_h^-\psi_h$ satisfies 
$$
- (\lambda+2\mu) \vec{\Delta}_h \left(\nabla_h^-\psi_h\right) - \rho\omega^2 \left(\nabla_h^-\psi_h\right) = \nabla_h^-\mathrm{div}_h^+\, \vec{f}_h.
$$
Using \eqref{eq:discrete_de_Rham} and more precisely that $\nabla_h^-\times\nabla_h^-=0$, one gets
$$
-\mu \vec{\Delta}_h \left(\nabla_h^-\psi_h\right) - (\lambda+\mu)\nabla_h^-\mathrm{div}_h^+ \left(\nabla_h^-\psi_h\right) - \rho\omega^2\left(\nabla_h^-\psi_h\right) = \nabla_h^-\mathrm{div}_h^+\, \vec{f}_h,
$$
hence $\nabla_h^-\psi_h$ satisfies \eqref{eq:Discrete_Elast_Helmholtz_3d} with a different right-hand-side. As a result, recalling that $\mathrm{div}_h^+\vec{\Phi}_h=0$ and applying the discrete elasticity operator, we obtain that $\vec{\Phi}_h$ satisfies the 
following equation 
$$
-\mu \vec{\Delta}_h \vec{\Phi}_h - \rho\omega^2 \vec{\Phi}_h = 
\underbrace{\vec{f}_h  +\alpha \nabla_h^-\mathrm{div}_h^+\, \vec{f}_h+\beta  \left(-\mu \vec{\Delta}_h -(\lambda+\mu)\nabla_h^-\mathrm{div}_h^+ -\rho\omega^2 \right)\vec{f}_h}_{:=\vec{g}_h}. 
$$
From the definition of $\alpha,\beta$, the previous right-hande side simplifies to 
$$
\vec{g}_h = -\frac{\mu}{\rho\omega^2} \vec{\Delta}_h\vec{f}_h + \frac{\mu}{\rho\omega^2} \nabla_h^-\mathrm{div}_h^+\vec{f}_h = -\frac{\mu}{\rho\omega^2} \nabla_h^+\times \nabla_h^-\times \vec{f}_h.
$$
Therefore, $\vec{\Phi}_h$ satisfies 
\begin{equation}\label{eq:Discrete_S_waves}
-\mu \vec{\Delta}_h \vec{\Phi}_h - \rho\omega^2 \vec{\Phi}_h =-\frac{\mu}{\rho\omega^2} \nabla_h^+\times \nabla_h^-\times \vec{f}_h,
\end{equation}
which is a vector Helmholtz equation associated to the wavenumber $k_S$ hence modelling the $S$-waves. Taking the discrete divergence $\mathrm{div}_h^{+}$ of \eqref{eq:Discrete_S_waves} and using \eqref{eq:discrete_de_Rham}, we retrieve that $\mathrm{div}_h^{+}\,\vec{\Phi}_h=0$. 

To summarize, we show in this section that any solution to the discrete elastic-Helmholtz equation \eqref{eq:Discrete_Elast_Helmholtz_3d} can be written as 
\begin{equation}\label{eq:discrete_Helmholtz_decomposition}
 \vec{u}_h =\vec{\Phi}_h - \alpha \nabla_h^-\psi_h - \beta \vec{f}_h,
\end{equation}
where $\psi_h$ satisfies a scalar Helmholtz equation with wavenumber $k_P$ (see \eqref{eq:Discrete_P_waves}) and $\vec{\Phi}_h$ is so that $\mathrm{div}_h^{+}\,\vec{\Phi}_h=0$ and satisfies a vector Helmholtz equation with wavenumber $k_S$ (see \eqref{eq:Discrete_S_waves}). In addition, for $\vec{f}_h = 0$, we retrieve the discrete analog the the classical Hodge-Helmholtz decomposition (see \eqref{eq:Helmholtz_decomposition_continuous}). 

\begin{rem}\label{rem:2D-Decomp-Helm-Discr}
All the previous developments can be also done in two dimensions mostly by setting the third component of $\vec{u}_h$ to $0$. In addition, the previouses computations can be done for any FD discretization for which we have discrete De Rham complex \eqref{eq:discrete_de_Rham}. 
\end{rem}

\begin{rem}
 Given the discrete Helmholtz decomposition \eqref{eq:discrete_Helmholtz_decomposition}, one may consider solving the elastic Helmholtz equation by solving two Helmholtz equations (one for $\vec{\Phi}_h$ and one for $\psi_h$), especially when $k_S \gg k_P$. 
 Indeed, this allows to avoid a large cost for computing $S$ waves \cite{burel2012solving,albella2018solving} by considering different discretization grids for the two equations. 
 Yet, let us emphasize that for a problem in a bounded domain $\Omega$, the boundary conditions couple these two PDEs. Hence, taking advantage of this decomposition is not trivial. This idea has not been investigated further and is beyond the scope of this work. 
\end{rem}

\subsection*{Dispersion analysis}

We are now in position to perform the discrete dispersion analysis. For simplicity, we will give the explanations in the $2$D case. The generalization to the $3$D case is straightforward and briefly explained in the Remark \ref{rem:3d_MAC} below.

The symbols associated to the stencils can be computed from the identities 
\begin{eqnarray*}
\delta_{x}^+ \phi_{\vec{\xi}}(x_i,y_j) &=& \underbrace{\left(\frac{\mathrm{e}^{\ii \xi_x h_x}-1}{h_x}\right)}_{=: \sigma_{\delta_x^+(\vec{\xi})}} \phi_{\vec{\xi}}(x_i,y_j),
\
\delta_{x}^- \phi_{\vec{\xi}}(x_i,y_j)= \underbrace{\left(\frac{1-\mathrm{e}^{-\ii \xi_x h_x}}{h_x} \right)}_{=: \sigma_{\delta_x^-(\vec{\xi})}} \phi_{\vec{\xi}}(x_i,y_j),
\\
\delta_{y}^+ \phi_{\vec{\xi}}(x_i,y_j) &=&\underbrace{\left( \frac{\mathrm{e}^{\ii \xi_y h_y}-1}{h_y} \right)}_{=: \sigma_{\delta_y^+(\vec{\xi})}}\phi_{\vec{\xi}}(x_i,y_j),
\
\delta_{y}^- \phi_{\vec{\xi}}(x_i,y_j) = \underbrace{\left(\frac{1-\mathrm{e}^{-\ii \xi_y h_y}}{h_y} \right)}_{=: \sigma_{\delta_y^-(\vec{\xi})}} \phi_{\vec{\xi}}(x_i,y_j),
\end{eqnarray*}
where $\phi_{\vec{\xi}}(\vec{x}) = \mathrm{e}^{\ii \vec{\xi}\cdot \vec{x}}$. 
Taking $\vec{f}=\vec{0}$ and inserting a plane wave $\vec{u} = \vec{X}\phi_{\vec{\xi}}(\vec{x})$ into \eqref{eq:MAC_FD_Grid_AT}, we obtain 
$$
\left( -\mu \sigma_{\Delta_h}\left(\vec{\xi} \right) I_d -(\lambda+\mu) \sigma_{\nabla_h}(\vec{\xi})\sigma_{\mathrm{div}_h}\left( \vec{\xi}\right)-\rho\omega^2 I_d \right)
\left(\begin{array}{c}
X_1 \phi_{\vec{\xi}}\left( x_i , y_{j+\frac{1}{2}} \right)
\\[0.5em]
X_2 \phi_{\vec{\xi}}\left( x_{i+\frac{1}{2}},y_j\right)
\end{array}\right) = \vec{0},
$$
with 
$$
\sigma_{\nabla_h}\left(\vec{\xi} \right) = \left(
\begin{array}{c}
\sigma_{\delta_x^-}\left(\vec{\xi} \right)
\\
\sigma_{\delta_y^-}\left(\vec{\xi} \right)
\end{array}
 \right),
\
 \sigma_{\mathrm{div}_h}\left(\vec{\xi} \right) =\left(\sigma_{\delta_x^+}\left(\vec{\xi} \right) ,\sigma_{\delta_y^+}\left(\vec{\xi} \right) \right),
$$
and 
$$
\sigma_{\Delta_h}\left(\vec{\xi} \right) = \sigma_{\mathrm{div}_h}\left(\vec{\xi} \right)\sigma_{\nabla_h}\left(\vec{\xi} \right).
$$
The symbol of the discretized elastic Helmholtz equation is thus the matrix
$$
\sigma_h(\vec{\xi}) = -\mu \sigma_{\Delta_h}(\vec{\xi}) I_d - (\lambda+\mu)\sigma_{\nabla_h}(\vec{\xi})\sigma_{\mathrm{div}_h}(\vec{\xi}) -\rho\omega^2 I_d,
$$
which is the equivalent of \eqref{eq:symb-cont} for the discretized problem. We can mimic the idea of the continuous case and using \eqref{eq:Formula_det}, with 
$$
A = \left(-\mu \sigma_{\Delta_h}(\vec{\xi})-\rho\omega^2\right) I_d,\ \vec{c} =  -(\lambda+\mu) \sigma_{\nabla_h}(\vec{\xi}),\ \vec{d}^T = \sigma_{\mathrm{div}_h}(\vec{\xi}),
$$
we obtain  
\begin{eqnarray*}
\mathrm{det}\,\sigma_h(\vec{\xi}) &=& \left(-\mu \sigma_{\Delta_h}(\vec{\xi})-\rho\omega^2\right) 
\left( -\mu \sigma_{\Delta_h}(\vec{\xi})-\rho\omega^2 -(\lambda+\mu)\sigma_{\mathrm{div}_h}(\vec{\xi}) \sigma_{\nabla_h}(\vec{\xi}) \right),
\\
&=& \left(-\mu \sigma_{\Delta_h}(\vec{\xi})-\rho\omega^2\right) 
\left( -(\lambda+2\mu) \sigma_{\Delta_h}(\vec{\xi})-\rho\omega^2 \right). 
\end{eqnarray*}
The discrete dispersion relation is therefore
\begin{equation}\label{eq:discrete_dispersion_relation-2D}
D_h := \left\{ \vec{\xi} \in \mathbb{R}^2 \ | \ \left(-\sigma_{\Delta_h}(\vec{\xi})-\omega^2\frac{\rho}{\mu}\right) 
\left( -\sigma_{\Delta_h}(\vec{\xi})-\omega^2\frac{\rho}{\lambda+2\mu} \right)=0 \right\},
\end{equation}
where $\sigma_{\Delta_h}(\vec{\xi})$ is the symbol of the discrete Laplace operator.

\begin{rem}[Dispersion analysis for the three-dimensional case]\label{rem:3d_MAC}
By a derivation similar to the two-dimensional case, the discrete symbol of the three-dimensional MAC scheme is
$$
\sigma_h(\vec{\xi}) = -\mu \sigma_{\Delta_h}(\vec{\xi}) I_d - (\lambda+\mu)\sigma_{\nabla_h}(\vec{\xi})\sigma_{\mathrm{div}_h}(\vec{\xi}) -\rho\omega^2 I_d,
$$
with 
$$
\sigma_{\nabla_h}\left(\vec{\xi} \right) = \left(
\begin{array}{c}
\sigma_{\delta_x^-}\left(\vec{\xi} \right)
\\
\sigma_{\delta_y^-}\left(\vec{\xi} \right)
\\
\sigma_{\delta_z^-}\left(\vec{\xi} \right)
\end{array}
 \right),
\
 \sigma_{\mathrm{div}_h}\left(\vec{\xi} \right) =\left(\sigma_{\delta_x^+}\left(\vec{\xi} \right) ,\sigma_{\delta_y^+}\left(\vec{\xi} \right) , \sigma_{\delta_z^+}\left(\vec{\xi} \right)\right)
$$
and thus formula \eqref{eq:discrete_dispersion_relation-2D} generalizes to:
\begin{equation}\label{eq:discrete_dispersion_relation}
D_h := \left\{ \vec{\xi} \in \mathbb{R}^d \ | \ \left(-\sigma_{\Delta_h}(\vec{\xi})-\omega^2\frac{\rho}{\mu}\right)^{d-1} 
\left( -\sigma_{\Delta_h}(\vec{\xi})-\omega^2\frac{\rho}{\lambda+2\mu} \right)=0 \right\}.
\end{equation}
\end{rem}

From \eqref{eq:discrete_dispersion_relation-2D} (or \eqref{eq:discrete_dispersion_relation} in $3$D), one has 
\begin{equation}\label{eq:Discrete_dispersion_relation}
D_h = \left\{-\sigma_{\Delta_h}(\vec{\xi}) - k_S^2=0 \right\} \bigcup \left\{-\sigma_{\Delta_h}(\vec{\xi}) - k_P^2=0 \right\}.
\end{equation}
We thus recover a discrete Helmholtz decomposition of the discrete plane wave, see \eqref{eq:discrete_Helmholtz_decomposition}, where each part is associated to a discrete Helmholtz equation with wavenumbers $k_S$ or $k_P$.   


As in the continuous case, we can define the discrete scalar wavenumber $\tilde{k}$ as 
$$
\forall \vec{\xi}\in D_h: \tilde{k} =\norme{\vec{\xi}}. 
$$
Writing any $\vec{\xi}\in D_h$ as $\vec{\xi} = \tilde{k} \vec{\xi}/\norme{\vec{\xi}}$ we can define the discrete wavenumber for each unit vector $\vec{\theta}\in \mathcal{S}^{d-1}$ (the unit circle or sphere) representing a direction of propagation, as the solution to
$$
-\sigma_{\Delta_h}(\tilde{k}\vec{\theta}) - k_S^2=0,\quad \text{or} \quad -\sigma_{\Delta_h}(\tilde{k}\vec{\theta}) - k_P^2=0,
$$
from which we clearly see that $\tilde{k}$ depends on the physical parameters $\lambda$, $\mu$, $\omega$ and $\rho$, the direction $\vec{\theta}$ and on the discretization parameter $h$. For the remainder of the manuscript, we will denote $\tilde{k}(\omega,\rho,\lambda,\mu,h,\vec{\theta})$ with $k \in \{k_S, k_P\}$. 
Considering now a uniform mesh with meshsize $h_x=h_y=h$, for the finite difference discretization described above, we have 
\begin{eqnarray*}
\sigma_{\Delta_h}(\vec{\xi}) &=& 
-\frac{4-2\left( \cos(\xi_1 h) + \cos(\xi_2 h)\right)}{h^2},\ \text{if}\ d=2,
\\[0.4em]
\sigma_{\Delta_h}(\vec{\xi}) &=&-\frac{6-2\left( \cos(\xi_1 h) + \cos(\xi_2 h) + \cos(\xi_3 h)\right)}{h^2},\ \text{if}\ d=3. 
\end{eqnarray*}
Using \cite[Theorem 3.2, Section 4.1, Section 4.3]{cocquet2024asymptotic}, we get the following expansion of the discrete wavenumber (see also \cite[Eq. (14), Eq. (17)]{cocquet2025improving}): as $kh\to 0$,
\begin{eqnarray*}
\tilde{k}(\omega,\rho,\lambda,\mu,h,\theta) &=& k  + \frac{k^3}{24}h^2 \left(\cos(\theta)^4 + \sin(\theta)^4 \right) + \cdots,\ \text{if}\ d=2,  
 \\ [0.4em]
\tilde{k}(\omega,\rho,\lambda,\mu,h,\theta,\varphi) &=& k +\frac{k^3}{24}h^2 \left(\cos(\phi)^4 \sin(\theta)^4 + \sin(\phi)^4\sin(\theta)^4 + \cos(\phi)^4  \right) + \cdots,\ \text{if}\ d=3,  
\end{eqnarray*}
and each formula can be adapted for each type of wave (P or S) 
simply by taking $k=k_S$ or $k=k_P$. 
More generally, the discrete wavenumber can be written as
\begin{equation}\label{eq:Err-disper}
\tilde{k}(\omega,\rho,\lambda,\mu,h,\vec{\theta}) = k +  \frac{k^3}{24}h^2 F(\vec{\theta}) + \dots,
\end{equation}
and thus the leading order of the relative dispersion error is 
$$
\text{Err}_{\text{disp}}  = \max_{\vec{\theta}\in \mathcal{S}^{d-1}} \left| \frac{\tilde{k}-k}{k} \right| = \frac{(kh)^2}{24}\max_{\vec{\theta}\in \mathcal{S}^{d-1}} |F(\vec{\theta})|. 
$$


\section{Asymptotic dispersion correction for the MAC scheme}
\label{sec:asymptotic}

We follow \cite{cocquet2021closed,cocquet2024asymptotic,cocquet2025improving} and perform asymptotic dispersion correction by introducing 
shifted/perturbed  
Lam\'e coefficients 
$\widehat{\lambda}$ and $\widehat{\mu}$ 
$$
\widehat{\lambda} = \lambda + h^2 \lambda_2,\ \widehat{\mu} = \mu + h^2 \mu_2.
$$

\begin{rem}
Shifting both $\lambda$ and $\mu$ allows us to correct the shear and pressure discrete wavenumbers simultaneously. 
This is particularly desired for compressible media, when the Poisson ratio is not too high.
In this case, 
$$
k_S = k_P\sqrt{2+\frac{\lambda}{\mu}} = k_P \sqrt{2 + \frac{2\nu}{1-2\nu}}\, ,
$$
so that 
$$
G_S =  G_P \sqrt{\frac{1-2\nu}{2(1-\nu)}},
$$
where 
$$
G_S =\frac{2\pi}{k_S h} , \ G_P =\frac{2\pi}{k_P h},
$$
are  the two different numbers of grid points per wavelength.
As a result, both wavenumbers and both numbers of grid point per wavelength can be in the same order of magnitude.
For example, one has $\sqrt{10}/2\approx 1.58<k_S/k_P = G_P/G_S<2$ for media with Poisson ratio $1/6\leq \nu < 1/3$, see \cite[Table 8]{gercek2007poisson}. 
We stress that our method is applicable regardless of the Poisson ratio. 
Nevertheless, in the nearly incompressible limit when $\nu\to0.5$, even for a low number of grid points per shear wavelength we have well-approximated pressure waves.
\end{rem}

To write the finite difference method using the shifted Lam\'e coefficients we use \eqref{eq:MAC_FD_Grid_AT} and replace each Lam\'e coefficient
by the shifted one. It reads
\begin{equation}\label{eq:Discrete_elastic_Helmholtz_shifted}
\left(-\widehat{\mu}\vec{\Delta}_h\,
- \left(\widehat{\lambda}+\widehat{\mu}\right)\nabla_h^- \mathrm{div}_h^+
- \rho\omega^2\right)\left(
\begin{array}{c}
u^x_{i+\frac{1}{2},j} \\ u^y_{i,j+\frac{1}{2}}
\end{array}
\right) = \left(
\begin{array}{c}
f^x_{i+\frac{1}{2},j} \\ f^y_{i,j+\frac{1}{2}}
\end{array}
\right),
\end{equation}
with a similar modification in $3$D (see Remark \ref{rem:3d_MAC}),
the dispersion relation associated to \eqref{eq:Discrete_elastic_Helmholtz_shifted} can be computed similarly to the one for 
\eqref{eq:Discrete_dispersion_relation}
and reads 
\begin{equation}\label{eq:Discrete_dispersion_relation_shift}
\begin{split}
D_h & = \left\{-\sigma_{\Delta_h}(\vec{\xi}) - \widehat{k}_S^2=0 \right\} \bigcup \left\{-\sigma_{\Delta_h}(\vec{\xi}) - \widehat{k}_P^2=0 \right\},\\
    & = \left\{-\sigma_{\Delta_h}(\vec{\theta}\widehat{\tilde{k}}) - \widehat{k}_S^2=0 \right\} \bigcup \left\{-\sigma_{\Delta_h}(\vec{\theta}\widehat{\tilde{k}}) - \widehat{k}_P^2=0 \right\},
\end{split}
\end{equation}
where 
$$
\widehat{k}_S(\omega,\rho,\lambda,\mu,h) =  \omega\sqrt{\frac{\rho}{\widehat{\mu}}}\quad \text{and}\quad \widehat{k}_P(\omega,\rho,\lambda,\mu,h) = \omega\sqrt{\frac{\rho}{\widehat{\lambda}+2\widehat{\mu}}}
$$
are the resulting shifted wavenumbers.
Expanding these discrete wavenumbers with shifted Lam\'e coefficient for small enough $h$ (and taking into account that $\widehat{k}_P$ and $\widehat{k}_S$ depend on $h$), yields 
\begin{eqnarray*}
\widehat{\tilde{k}}_P(\omega,\rho,\lambda,\mu,h,\vec{\theta}) &=& k_{P} +  h^2\left(\frac{k_P^3}{24}F(\vec{\theta}) +  k_P \frac{-2\mu_2 - \lambda_2}{4\mu + 2 \lambda}\right)+ \cdots
\\[0.4em]
\widehat{\tilde{k}}_{S}(\omega,\rho,\lambda,\mu,h,\vec{\theta})&=&  k_{S} +  h^2\left(\frac{k_S^3}{24} F(\vec{\theta}) - \frac{k_S}{2\mu}\mu_2 \right)+\cdots,
\end{eqnarray*} 
and we can now reduce the dispersion error for both $S$ and $P$ waves by computing $\lambda_2^{\text{asy}},\ \mu_2^{\text{asy}}$ so that 
\begin{equation}\label{eq:def_mu_asy}
\mu_2^{\text{asy}} = \argmin_{\mu_2}\left(\max_{\vec{\theta}\in \mathcal{S}^{d-1}}\left|F(\vec{\theta}) - \frac{24}{2\mu k_S^2}\mu_2 \right| \right)
\end{equation} 
and
\begin{equation}\label{eq:def_lambda_asy}
\lambda_2^{\text{asy}} = \argmin_{\lambda_2,\mu_2}\left(  \max_{\vec{\theta}\in 
\mathcal{S}^{d-1}}\left|F(\vec{\theta}) - 24\frac{2\mu_2 + \lambda_2}{(4\mu + 2 \lambda)k_P^2} \right| \right).
\end{equation}
The next theorem gives the optimal asymptotic values of $\lambda_2$ and $\mu_2$.
\begin{thm}\label{thm:Asymptotic_shift_2nd_order}
The unique solution to \eqref{eq:def_mu_asy}-\eqref{eq:def_lambda_asy} is 
\begin{eqnarray*}
&& \lambda_2^{\text{asy}}= -\frac{\rho\omega^2}{16},\,\mu_2^{\text{asy}} =\frac{\rho\omega^2}{16}\ \text{if}\ d=2, \ \text{and}
\\[0.4em]
&& \lambda_2^{\text{asy}}= -\frac{\rho\omega^2}{18},\,\mu_2^{\text{asy}} =\frac{\rho\omega^2}{18}\ \text{if}\ d=3.
\end{eqnarray*}
\end{thm}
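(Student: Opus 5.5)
The plan is to reduce both minimax problems \eqref{eq:def_mu_asy}--\eqref{eq:def_lambda_asy} to one elementary fact. For a continuous function $F$ on the compact set $\mathcal{S}^{d-1}$, with $F_{\min}=\min F$ and $F_{\max}=\max F$, the map
$$
c\mapsto \max_{\vec{\theta}}|F(\vec{\theta})-c| = \max\left(F_{\max}-c,\ c-F_{\min}\right)
$$
is the maximum of a strictly decreasing and a strictly increasing affine function. It therefore has the unique minimizer $c^\ast=(F_{\min}+F_{\max})/2$.

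\textbf{Shear shift.} In \eqref{eq:def_mu_asy} the quantity $c=\frac{24}{2\mu k_S^2}\mu_2$ is an affine bijection of $\mu_2$. Hence $\mu_2^{\text{asy}}$ exists, is unique, and is characterized by
$$
\frac{24\,\mu_2}{2\mu k_S^2}=c^\ast.
$$

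\textbf{Pressure shift.} In \eqref{eq:def_lambda_asy} I read the problem sequentially: $\mu_2$ is already fixed to $\mu_2^{\text{asy}}$. Then $c=24\frac{2\mu_2+\lambda_2}{(4\mu+2\lambda)k_P^2}$ is an affine bijection of $\lambda_2$. This gives the unique condition
$$
2\mu_2+\lambda_2=\frac{(4\mu+2\lambda)k_P^2}{24}\,c^\ast .
$$
Using $\mu k_S^2=\rho\omega^2=(\lambda+2\mu)k_P^2$, both conditions become independent of the Lamé parameters:
$$
\mu_2=\frac{\rho\omega^2 c^\ast}{12},\qquad 2\mu_2+\lambda_2=\frac{\rho\omega^2c^\ast}{12},\qquad\text{so}\qquad \lambda_2=-\mu_2 .
$$

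\textbf{Extrema of $F$.} The remaining work is to compute $F_{\min}$ and $F_{\max}$.

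In $d=2$, $F(\theta)=\cos^4\theta+\sin^4\theta=1-\tfrac12\sin^2(2\theta)$. Its range is $[1/2,1]$, so $c^\ast=3/4$. This gives $\mu_2=\rho\omega^2/16$ and $\lambda_2=-\rho\omega^2/16$.

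In $d=3$, write $\vec{\theta}=(\theta_1,\theta_2,\theta_3)$ with $\sum\theta_i^2=1$. The expression in \eqref{eq:Err-disper} is then $F=\sum_i\theta_i^4$.
\begin{itemize}
\item Since $\theta_i^4\le\theta_i^2$, we get $F\le 1$, with equality at the coordinate directions.
\item By Cauchy--Schwarz, $F\ge\frac13\left(\sum\theta_i^2\right)^2=\frac13$, with equality at $\vec{\theta}=(1,1,1)/\sqrt3$.
\end{itemize}
Hence $c^\ast=2/3$, which gives $\mu_2=\rho\omega^2/18$ and $\lambda_2=-\rho\omega^2/18$.

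The only delicate points are the following:
\begin{itemize}
\item The interpretation of \eqref{eq:def_lambda_asy}. As a joint problem in $(\lambda_2,\mu_2)$ it only determines $2\mu_2+\lambda_2$. Uniqueness of $\lambda_2$ therefore comes from inserting the already-unique $\mu_2^{\text{asy}}$.
\item The identification of the spherical-coordinate form of $F$ with $\sum\theta_i^4$, which is needed for the three-dimensional extremal computation.
\end{itemize}
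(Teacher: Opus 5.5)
Your proof is correct and follows the paper's route. You reduce each problem to the scalar problem $\min_c\max_{\vec{\theta}}|F(\vec{\theta})-c|$, whose unique solution is the midpoint $(F_{\min}+F_{\max})/2$ (equal to $3/4$ in 2D and $2/3$ in 3D), then back-substitute using $\mu k_S^2=\rho\omega^2=(\lambda+2\mu)k_P^2$, with uniqueness of $\lambda_2$ coming from combining the two conditions exactly as the paper does. The only difference is that you prove the midpoint lemma and the extrema of $F=\sum_i\theta_i^4$ directly, whereas the paper cites them from earlier work; your identification of $F$ is the right one, since the paper's displayed spherical form has a typo in its last term, which should read $\cos(\theta)^4$.
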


\begin{proof}
Each of the optimization problems \eqref{eq:def_mu_asy} and \eqref{eq:def_lambda_asy} can be written as 
\begin{equation}\label{eq:general_optim_pbm}
\argmin_{t} \norme{z_1 F(\cdot) + t z_2}_{L^\infty(\mathcal{S}^{d-1})},
\end{equation}
for some continuous function $F$ and $z_1=z_2=1$. According to \cite[Proposition 4.1]{cocquet2025improving}, the unique solution to \eqref{eq:general_optim_pbm} is 
$$
t^* = - \frac{z_1}{z_2} \left(\frac{F_{\max}+F_{\min}}{2}\right),\ F_{\max} = \max_{\mathcal{S}^{d-1}}F,\ F_{\min} = \min_{\mathcal{S}^{d-1}} F, 
$$
and one has 
$$
\min_{t} \norme{z_1 F(\cdot) + t z_2}_{L^\infty(\Omega)} = |z_1|^2 \frac{|F_{\max}-F_{\min}|}{2}.
$$
As a result, taking $\lambda_2^{\text{asy}},\mu_2^{\text{asy}}$ so that 
$$
- 24\frac{2\mu_2^{\text{asy}} + \lambda_2^{\text{asy}}}{(4\mu + 2 \lambda)k_P^2} =  t^*,\ - \frac{24}{2\mu k_S^2}\mu_2^{\text{asy}} = t^*,
$$
we will get that 
\begin{eqnarray*}
\max_{\vec{\theta}\in 
\mathcal{S}^{d-1}}\left|F(\vec{\theta}) - 24\frac{2\mu_2\lambda_2^{\text{asy}} + \lambda_2\lambda_2^{\text{asy}}}{(4\mu + 2 \lambda)k_P^2} \right| &=& \max_{\vec{\theta}\in \mathcal{S}^{d-1}}\left|F(\vec{\theta}) - \frac{24}{2\mu k_S^2}\mu_2\lambda_2^{\text{asy}} \right|
\\
&=& 
\frac{|F_{\max}-F_{\min}|}{2},
\end{eqnarray*}
and thus both min-max problems \eqref{eq:def_lambda_asy}-\eqref{eq:def_mu_asy} are solved.

The values of $t^*$ for two dimensions and three dimensions are already given in \cite[Theorem 4.1, Theorem 4.3]{cocquet2024asymptotic} (see also \cite[Proposition 4.1]{cocquet2025improving}) and they are 
\begin{equation}\label{eq:asymp_k_Helmholtz}
t^* = \left\{\begin{array}{rl}
 \displaystyle -\frac{3}{4} & \text{if}\ d=2,
\\[1em]
 \displaystyle - \frac{2}{3} & \text{if}\ d=3.
\end{array}\right.
\end{equation}
This gives the aforementioned formula for the shifted Lam\'e coefficient.
\end{proof}

We show in Figure \ref{fig:disp_rel} the effect of the asymptotic optimal shifted Lamé coefficient on the dispersion relation.
In Figure \ref{fig:phase_err} we illustrate the expected dispersion error \eqref{eq:Err-disper} in logarithmic grid, calculated by sampling the dispersion relation for different numbers of grid points per shear wavelength. These numerical experiments show that the asymptotic dispersion correction reduces the dispersion error even for 
relatively small number of grid points per wavelength. 


\begin{figure}
\begin{center}
	\newcommand{\image}[1]{\includegraphics[width=0.4\linewidth]{./#1}}
    \subfigure[\footnotesize $G_S=5$, no correction]{\image{disp_rel_5G_no_corr.eps}\label{fig:disp_rel_5G_no_corr}}
    \subfigure[\footnotesize $G_S=5$, asymptotic correction]{\image{disp_rel_5G_corr.eps}\label{fig:disp_rel_5G_corr}}
    \subfigure[\footnotesize $G_S = 3$, no correction
]{\image{disp_rel_3G_no_corr.eps}\label{fig:disp_rel_3G_no_corr}}
    \subfigure[\footnotesize $G_S = 3$, asymptotic correction]{\image{disp_rel_3G_corr.eps}\label{fig:disp_rel_3G_corr}}
\\
\end{center}
\caption{Dispersion relations for the standard second-order MAC scheme for the elastic Helmholtz equation, with and without dispersion correction. We used Poisson ratio $\nu = 0.25$ so that $G_P = \sqrt{3} G_S$. 
}\label{fig:disp_rel}
\end{figure}

\begin{figure}
\begin{center}
	\newcommand{\image}[1]{\includegraphics[width=0.49\linewidth]{./#1}}
   \subfigure[\footnotesize 2D]{\image{phase_err.eps}\label{fig:phase_err_2D}}
  \subfigure[\footnotesize 3D]{\image{phase_err_3D.eps}\label{fig:phase_err_3D}}\\
\end{center}
\caption{Dispersion error calculated by sampling the symbol for different $G_S$. On the left for two dimensions and on the right for three dimensions.
}\label{fig:phase_err}
\end{figure}

\section{Extension to other FD scheme and limits}
\label{sec:extension}

We provide here a direct extension of the computations of the asymptotic optimal Lamé parameters for general FD schemes for which the scalar Laplacian is approximated as $\mathrm{div}_h\nabla_h$. We then give some limitations of the approach presented in this paper.

\subsection{A general recipe for asymptotic dispersion correction using shifted Lamé coefficients}\label{sec:Extension_shifted_LAme}

Let us consider a FD discretization such that 
\begin{equation}\label{eq:Assumption_Delta_h=div_grad}
\Delta_h=\mathrm{div}_h\nabla_h,
\end{equation}
and possibly involving a mass-matrix in front of the term $\omega^2\rho$ (see e.g. \cite{yovel2024lfa}). In this case, the discrete symbol can be expressed as 
$$
\sigma_h(\vec{\xi}) = -\mu \sigma_{\Delta_h}(\vec{\xi}) I_d - (\lambda+\mu)\sigma_{\nabla_h}(\vec{\xi})\sigma_{\mathrm{div}_h}(\vec{\xi}) -\rho\omega^2 \sigma_{M_h}(\vec{\xi}) I_d, 
$$
where $\sigma_{M_h}(\vec{\xi})$ is the symbol of the mass-matrix which approximates the identity operator as $h\to0$. Using \eqref{eq:Formula_det}, the dispersion relation is
\begin{equation}\label{eq:discrete_dispersion_relation_general}
D_h = D_h(k_S)\cup D_h(k_P),
\end{equation}
where
\begin{equation}\label{eq:dispersion_relation_Helmholtz}
D_h(k) = \left\{ -\sigma_{\Delta_h}(\vec{\xi})-k^2\sigma_{M_h}(\vec{\xi})=0\right\}. 
\end{equation}
We emphasize that $D_h(k)$ is exactly the discrete dispersion relation of the (scalar) Helmholtz equation $-\Delta u - k^2u =f$ discretized (in $3$D) with
$$
-\Delta_h u_{i,j,k} - k^2 M_h u_{i,j,k} = f_{i,j,k}. 
$$
The discrete wavenumber $\tilde{k}$ can be defined as above as 
$$
\forall \vec{\theta}\in \mathcal{S}^{d-1}:\  -\sigma_{\Delta_h}(\tilde{k}\vec{\theta})-k^2\sigma_{M_h}(\tilde{k} \vec{\theta})=0,
$$
and we provided in \cite[Proposition 3.1, Theorem 3.2, Theorem A.1]{cocquet2024asymptotic} some assumptions\footnote{The latter mostly require the FD stencil to be of order $p$ on plane waves.} under which one has 
\begin{equation}\label{eq:expansion_k_d_Helmholtz}
\tilde{k}(\omega, \rho, \lambda, \mu,h,\vec{\theta}) = k + h^p F_p(k,\vec{\theta}) + \cdots,
\end{equation}
for some known function $F_p$ which is smooth in each of its variables. Using this and \eqref{eq:discrete_dispersion_relation_general}, the two discrete wavenumbers 
satisfy 
\begin{eqnarray*}
\tilde{k}_{P}(\omega,\rho,\lambda,\mu,h,\vec{\theta}) &=& k_P + h^p F_p(k_P,\vec{\theta}) + \cdots,
\\
\tilde{k}_{S}(\omega,\rho,\lambda,\mu,h,\vec{\theta}) &=& k_S + h^p F_p(k_S,\vec{\theta}) + \cdots.
\end{eqnarray*}
Introducing the shifted Lam\'e parameters 
$$
\widehat{\lambda} = \lambda + h^p \lambda_p,\ \widehat{\mu} = \mu + h^p \mu_p,
$$
in the FD approximation, results in having the following expansions for the associated discrete wavenumbers
\begin{eqnarray*}
\widehat{\tilde{k}}_{P}(\omega,\rho,\lambda,\mu,h,\vec{\theta}) &=& \widehat{k}_P + h^p F_p(\widehat{k}_P,\vec{\theta}) + \cdots,
\\
\widehat{\tilde{k}}_{S}(\omega,\rho,\lambda,\mu,h,\vec{\theta}) &=& \widehat{k}_S + h^p F_p(\widehat{k}_S,\vec{\theta}) + \cdots.
\end{eqnarray*}
Using the expansions
\begin{eqnarray*}
\widehat{k}_S  &=& \omega \sqrt{\frac{\rho}{\widehat{\mu}}} =  k_S - h^p\frac{k_S}{2\mu}  \mu_p + \cdots
\\
\widehat{k}_P  &=&\omega \sqrt{\frac{\rho}{\widehat{\lambda}+2\widehat{\mu}}} =  k_P - h^p\frac{k_P}{2(\lambda + 2\mu)} \left(\lambda_p + 2\mu_p \right) + \cdots,
\end{eqnarray*}
both the discrete wavenumbers associated to the FD scheme with shifted Lam\'e coefficients satisfy
\begin{eqnarray*}
\widehat{k}_{d,S}(\omega,\rho,\lambda,\mu,h,\vec{\theta}) &=& k_S + h^p \left(F_p(k_S,\vec{\theta}) -\frac{k_S}{2\mu}  \mu_p\right) + \cdots,
\\
\widehat{k}_{d,P}(\omega,\rho,\lambda,\mu,h,\vec{\theta}) &=& k_P + h^p \left(F_p(k_P,\vec{\theta}) -\frac{k_P}{2(\lambda + 2\mu)} \left(\lambda_p + 2\mu_p \right)\right)+ \cdots.
\end{eqnarray*}
We can now define the asymptotically optimal shifts that minimize the dispersion errors for both the $S$ and $P$ waves as 
\begin{equation}\label{eq:Asympt_optimal_Lambda_shifts}
\lambda_p^{\text{asy}} = \argmin_{\lambda_p} \left(\max_{\vec{\theta}}\left|F_p(k_P,\vec{\theta}) -\frac{k_S}{2\mu}  \mu_p\right| 
\right),
\end{equation}
and
\begin{equation}\label{eq:Asympt_optimal_mu_shifts}
\mu_p^{\text{asy}} = \argmin_{\mu_p} \left(\max_{\vec{\theta}}\left|F_p(k_S,\vec{\theta}) -\frac{k_P}{2(\lambda + 2\mu)} \left(\lambda_p + 2\mu_p \right)\right|\right).
\end{equation}
Using \cite[Proposition 4.1]{cocquet2025improving}, the unique solution to 
$$
\argmin_{t} \max_{\vec{\theta}} \left|F_p(k,\vec{\theta}) + t  \right|,
$$
is 
$$
t^*(k) = - \frac{F_{\max}(k) + F_{\min}(k)}{2},\ F_{\max}(k) = \max_{\vec{\theta}} F(k,\vec{\theta}) ,\  F_{\min}(k)=\min_{\vec{\theta}} F(k,\vec{\theta}),
$$
and one has 
$$
\min_{t} \max_{\vec{\theta}} \left|F_p(k,\vec{\theta}) + t  \right| = \frac{|F_{\max}(k)-F_{\min}(k)|}{2}. 
$$
The unique solution to \eqref{eq:Asympt_optimal_Lambda_shifts} and \eqref{eq:Asympt_optimal_mu_shifts} can thus be computed from  
\begin{equation}\label{eq:asympt_optimal_shifts_general}
 -\frac{k_S}{2\mu}  \mu_p = t^*(k_S),
\
-\frac{k_P}{2(\lambda + 2\mu)} \left(\lambda_p + 2\mu_p \right) = t^*(k_P).
\end{equation}
A recipe to get the asymptotic optimal Lam\'e coefficients can finally be summarized as:
\begin{enumerate}
\item Compute an expansion of the discrete wavenumber associated to the elastic Helmholtz equation with wavenumber $k$ and identify the function $F_p$ in its leading order term (see \eqref{eq:expansion_k_d_Helmholtz}).
\item Compute the minimal and maximal values of $F_p$ over the unit sphere.
\item Compute the asymptotic optimal shifts from \eqref{eq:asympt_optimal_shifts_general}. 
\end{enumerate}

\subsection{Limitations of the general recipe}

We give here some limitations of dispersion correction using the shifted Lamé coefficients as described in Section \ref{sec:Extension_shifted_LAme}. 




\subsection*{FD scheme whose dispersion relation can be factorized}
We emphasize that our dispersion correction technique is based on the fact that the discrete dispersion relation can be factorized as the continuous one. This factorization can actually been done even for FD scheme such that \eqref{eq:Assumption_Delta_h=div_grad} is not satisfied. Let us consider FD operators $\Delta_h$, $\nabla_h$ and $\mathrm{div}_h$ such that  
$$
\Delta_h \neq \mathrm{div}_h \nabla_h,
$$
and approximate \eqref{eq:Elastic_Helmholtz_homogeneous_mu} directly as $\left(-\mu\Delta_hI_d - (\lambda+\mu)\nabla_h\mathrm{div}_h-\rho\omega^2 M_h I_d\right)$ where $M_h$ is a mass-matrix. Note that the discrete symbols are such that
$$
\sigma_{\Delta_h}\neq \sigma_{\mathrm{div}_h}\sigma_{\nabla_h},
$$ 
and the discrete symbol is now
$$
\sigma_h(\vec{\xi}) =  -\mu \sigma_{\Delta_h}(\vec{\xi}) I_d - (\lambda+\mu)\sigma_{\nabla_h}(\vec{\xi})\sigma_{\mathrm{div}_h}(\vec{\xi}) -\rho\omega^2 \sigma_{M_h}(\vec{\xi}) I_d.
$$
Using then \eqref{eq:Formula_det}, one has 
$$
\mathrm{det}\left(\sigma_h(\vec{\xi}) \right)
= \sigma_S(\vec{\xi})\, \sigma_P(\vec{\xi})
$$
where
$$
\sigma_S(\vec{\xi})
= \left(-\mu\sigma_{\Delta_h}(\vec{\xi})-\omega^2\rho\sigma_{M_h}(\vec{\xi})\right)^{d-1}
$$
and
$$
\sigma_P(\vec{\xi})
= \left(-\mu\sigma_{\Delta_h}(\vec{\xi})-\omega^2\rho\sigma_{M_h}(\vec{\xi}) 
-(\lambda+\mu) \sigma_{\mathrm{div}_h}(\vec{\xi}) \sigma_{\nabla_h}(\vec{\xi})
\right). 
$$
Note that $\sigma_{\mathrm{div}_h}$ and $\sigma_{\nabla_h}$ approximate the symbols of the corresponding continuous operators hence, as $h\to 0$, one has 
$$
\sigma_{\nabla_h}(\vec{\xi}) = \ii \vec{\xi} + O(h),\ \sigma_{\mathrm{div}}(\vec{\xi}) = \ii \vec{\xi}^T + O(h), 
$$
and thus $\sigma_{\mathrm{div}_h}(\vec{\xi}) \sigma_{\nabla_h}(\vec{\xi})=-\norme{\vec{\xi}}^2+O(h)$ so that it is indeed an approximation of the Laplacian. The P and S waves are therefore no longer approximated with the same finite-differences Laplacian and thus the general recipe given in Section \ref{sec:Extension_shifted_LAme} has to be extended to such FD methods. Although the recipe can be adapted to this new case, it is based on an explicit expansion of the discrete wavenumber (see \eqref{eq:expansion_k_d_Helmholtz}). The latter can be obtained from \cite[Proposition 3.1, Theorem 3.2, Theorem A.1]{cocquet2024asymptotic} which is specific to discretization of Helmholtz equations which involves the same FD-Laplacian hence being the main reason we cannot readily extend the recipe to such FD schemes. 

\subsection*{A FD scheme for which $(\nabla \mathrm{div})_h \neq \nabla_h\mathrm{div}_h$ }

There also exists some FD schemes for which \eqref{eq:Assumption_Delta_h=div_grad} is not satisfied. 
Such examples can be found in \cite{sjogreen2012fourth,nilsson2007stable}. More precisely, the second-order FD scheme from \cite{nilsson2007stable} approximates \eqref{eq:Elastic_Helmholtz_homogeneous_mu} as (see \cite[Eq. (17)-(18)]{nilsson2007stable} and specify to two-dimensional setting and constant coefficients)
\begin{equation}\label{eq:Nilsson_2nd_order_FD}
\begin{array}{rcl}
-\mu \Delta_h u^x_{i,j} -(\lambda+\mu) \left(\delta_x^+\delta_x^- u^x_{i,j} + \delta_{xy}^hu^y_{i,j}\right)-\rho\omega^2 u^x_{i,j}
&=& f_{i,j}^x,
\\[0.5em]
-\mu \Delta_h u^y_{i,j} 
-(\lambda+\mu) \left(\delta_y^+\delta_y^- u^y_{i,j} + \delta_{xy}^hu^x_{i,j}\right)-\rho\omega^2 u^y_{i,j}
&=& f^y_{i,j},
\end{array}
\end{equation}
where the approximation of the cross-derivative is 
$$
\delta_{xy}^h v_{i,j} = \frac{1}{4h^2}\left(v_{i+1,j+1}-v_{i-1,j+1} - v_{i+1,j-1}+v_{i-1,j-1} \right).
$$
The FD scheme \eqref{eq:Nilsson_2nd_order_FD} can be written in compact form as 
$$
\left(-\mu I_d\Delta_h - (\lambda+\mu)\left(\nabla \mathrm{div} \right)_h -\rho\omega^2 I_d\right) \left( 
\begin{array}{c}
u^x_{i,j} \\ u^y_{i,j}
\end{array}
\right)
=  \left( 
\begin{array}{c}
f^x_{i,j} \\ f^y_{i,j}
\end{array}
\right).
$$
Note that the FD scheme \eqref{eq:Nilsson_2nd_order_FD} has a second-order truncation error. In addition, one has 
$$
\delta_{xy}^h v_{i,j} = \frac{1}{4}\left( \delta_{x}^+ + \delta_x^- \right)\left( \delta_{y}^+ + \delta_y^- \right)v_{i,j},
$$
so that the discrete symbol is 
$$
\sigma(\vec{\xi}) = (-\mu \sigma_{\Delta_h}(\vec{\xi}) - \rho \omega^2) I_d - 
(\lambda+\mu) \sigma_{(\grad\mathrm{div})_h}(\vec{\xi}),
$$
where 
$$
\sigma_{(\grad\mathrm{div})_h}(\vec{\xi}) =\frac{1}{h^2}
\left(
\begin{array}{cc}
-2(1 - \cos(\xi_x h)) &  -\sin(\xi_x h)\sin(\xi_y h)
\\
-\sin(\xi_x h)\sin(\xi_y h) & -2(1 - \cos(\xi_y h))
\end{array}
\right). 
$$
We have the following result. 
\begin{lemma}\label{lemma:Non_factorization}
Let $\vec{\xi}\notin (2\pi/h) \mathbb{Z}^2$, then there does not exist $\vec{c},\vec{d}\in \mathbb{C}^2$ such that 
$$
\sigma_{(\grad\mathrm{div})_h}(\vec{\xi}) = \vec{c}^T\vec{d}. 
$$
\end{lemma}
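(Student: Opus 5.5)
The plan is to read $\vec{c}^T\vec{d}$ as the outer product $\vec{c}\,\vec{d}^T$, i.e. a $2\times 2$ matrix of rank at most one. This is the structure that \eqref{eq:Formula_det} exploits in the MAC case, where $\sigma_{\nabla_h}\sigma_{\mathrm{div}_h}$ has this form. The argument then becomes linear algebra on the explicit symbol $\sigma_{(\grad\mathrm{div})_h}(\vec{\xi})$ given just before the lemma.

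A $2\times 2$ matrix is of the form $\vec{c}\,\vec{d}^T$ if and only if its determinant vanishes; the zero matrix corresponds to $\vec{c}=0$. So I would argue by contradiction. If such $\vec{c},\vec{d}$ existed, then $\det \sigma_{(\grad\mathrm{div})_h}(\vec{\xi})=0$. The goal is therefore to compute this determinant in closed form and show it is nonzero under the hypothesis on $\vec{\xi}$.

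For the computation, set $a=\xi_x h$, $b=\xi_y h$, $s_a=\sin(a/2)$, $c_a=\cos(a/2)$, and similarly $s_b$, $c_b$. The half-angle identities $1-\cos a = 2s_a^2$ and $\sin a = 2 s_a c_a$ give
\begin{equation*}
h^4\det \sigma_{(\grad\mathrm{div})_h}(\vec{\xi}) = 4(1-\cos a)(1-\cos b)-\sin^2 a\,\sin^2 b = 16\, s_a^2 s_b^2\left(1-c_a^2c_b^2\right).
\end{equation*}
This factorization is the heart of the proof. The determinant vanishes exactly when $s_a=0$, or $s_b=0$, or $c_a^2c_b^2=1$. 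The last condition forces $|c_a|=|c_b|=1$, hence $s_a=s_b=0$, so it is contained in the first two. Since $s_a=0$ is equivalent to $\xi_x\in(2\pi/h)\mathbb{Z}$, the zero set of the determinant is the union of the two lines $\{\xi_x\in (2\pi/h)\mathbb{Z}\}\cup\{\xi_y\in(2\pi/h)\mathbb{Z}\}$.

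The main obstacle is matching this zero set with the hypothesis $\vec{\xi}\notin(2\pi/h)\mathbb{Z}^2$. The hypothesis excludes only the lattice points, where both components vanish modulo $2\pi/h$, whereas the determinant also vanishes on the lines where exactly one component does.

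On those lines the symbol reduces to a diagonal matrix with exactly one nonzero entry, for instance $\mathrm{diag}\left(0,-2(1-\cos b)/h^2\right)$. Such a matrix is of the form $\vec{c}\,\vec{d}^T$ with $\vec{c}$ and $\vec{d}$ both parallel to a canonical basis vector. So on these lines the conclusion of the lemma cannot hold under the outer-product reading; only the factorization of \eqref{eq:Formula_det} with a genuine gradient/divergence pair could still be excluded there. The determinant argument therefore proves the statement only for $\vec{\xi}$ off these lines. This is the set where the contrast with the MAC symbol $\sigma_{\nabla_h}\sigma_{\mathrm{div}_h}$, which is always rank one, is meaningful for the dispersion analysis.

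I would state this restriction explicitly when writing the proof, rather than claim non-factorization on all of $\mathbb{R}^2\setminus(2\pi/h)\mathbb{Z}^2$.
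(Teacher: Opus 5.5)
Your approach is the paper's: a $2\times 2$ matrix is an outer product $\vec{c}\,\vec{d}^T$ exactly when its determinant vanishes. Everything therefore reduces to locating the zeros of $h^4\det\sigma_{(\grad\mathrm{div})_h}(\vec{\xi})=4(1-X)(1-Y)-(1-X^2)(1-Y^2)$, with $X=\cos(\xi_x h)$ and $Y=\cos(\xi_y h)$. Your half-angle form $16\,s_a^2s_b^2(1-c_a^2c_b^2)$ is correct. The paper writes $(X-1)(Y-1)(XY+X+Y-3)$ instead, which has the wrong overall sign; the correct factorization is $(1-X)(1-Y)\bigl(4-(1+X)(1+Y)\bigr)$. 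This is harmless for the zero set.

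The substantive point is that your objection is right: the paper's proof makes exactly the slip you anticipate. It shows that $XY+X+Y-3=(1+X)(1+Y)-4$ vanishes only at $X=Y=1$. It then concludes that the determinant vanishes only there, overlooking that the prefactor $(X-1)(Y-1)$ already vanishes when just one of $X$, $Y$ equals $1$. Your example $\vec{\xi}=(0,\pi/h)$ is a genuine counterexample to the lemma as stated: there the symbol is $\mathrm{diag}(0,-4/h^2)$, i.e.\ $\vec{c}=\vec{e}_y$, $\vec{d}=-(4/h^2)\vec{e}_y$. The provable statement is your restricted one, requiring $\xi_x\notin(2\pi/h)\mathbb{Z}$ and $\xi_y\notin(2\pi/h)\mathbb{Z}$. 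That version is all the paper uses afterwards. FD operators with $(\grad\mathrm{div})_h=\nabla_h\mathrm{div}_h$ would make the symbol of rank at most one at \emph{every} $\vec{\xi}$, and this fails on the open dense complement of the lines.

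One sentence of yours should be dropped: on the lines, non-factorization cannot be rescued even for a ``genuine'' gradient/divergence pair. If $\xi_x\in(2\pi/h)\mathbb{Z}$, then $\sigma_{\delta_x^\pm}(\vec{\xi})=0$. The MAC product is then $\sigma_{\nabla_h}(\vec{\xi})\sigma_{\mathrm{div}_h}(\vec{\xi})=\mathrm{diag}\bigl(0,-2(1-\cos(\xi_y h))/h^2\bigr)$, which coincides with $\sigma_{(\grad\mathrm{div})_h}(\vec{\xi})$ there. The obstruction is therefore only pointwise off the lines, or global at the operator level.
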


\begin{proof}
We assume there exists $\vec{c},\vec{d}\in \mathbb{C}^2$ (that may depend on $\vec{\xi}$) such that $
\sigma_{(\grad\mathrm{div})_h}(\vec{\xi}) = \vec{c}^T\vec{d}$. This gives $\mathrm{det}\left( \sigma_{(\grad\mathrm{div})_h}(\vec{\xi})\right)=0$ and we are going to show that this can only occurs if $\vec{\xi}\in (2\pi/h) \mathbb{Z}^2$.

We have 
$$
\mathrm{det}\left(\sigma_{(\grad\mathrm{div})_h}(\vec{\xi}) 
\right) = \frac{1}{h^4}\left(4(1-\cos(\xi_x h))(1-\cos(\xi_y h)) - \sin(\xi_x h)^2\sin(\xi_y h)^2   \right).
$$
Setting $X=\cos(\xi_xh)$ and $Y=\cos(\xi_yh)$, the solution to $\mathrm{det}\left( \sigma_{(\grad\mathrm{div})_h}(\vec{\xi})\right) =0$ satisfies
$$
0=4(1-X)(1-Y) - (1-X^2)(1-Y^2) =  (X-1)(Y-1)(XY+X+Y-3).
$$
Note that $XY+X+Y-3 = (1+X)(1+Y)-4 \leq 0$ with the maximal value reached at $X=Y=1$. As a result, the only solutions are 
$X=Y = 1$, which translates to $\vec{\xi} \in   (2\pi/h)\mathbb{Z}^2$. 
\end{proof}

Lemma \ref{lemma:Non_factorization} ensures that we cannot factorize $\sigma_{(\grad\mathrm{div})_h}(\vec{\xi})$ as $\vec{c}^T\vec{d}$ unless $h\vec{\xi}\in 2\pi \mathbb{Z}^2$ but in that case $\sigma_{(\grad\mathrm{div})_h}(\vec{\xi})=O$. As a result, there do not exists FD operators $\nabla_h$ and $\mathrm{div}_h$, approximating $\nabla$ and $\mathrm{div}$, such that $(\nabla \mathrm{div})_h = \nabla_h \mathrm{div}_h$ for the FD scheme \eqref{eq:Nilsson_2nd_order_FD}.

Nevertheless, we can still compute the dispersion relation in a factorized form. Indeed, we can remark that the matrix $\sigma_{(\grad\mathrm{div})_h}$ is symmetric and therefore diagonalizable. Denoting by $r_1(\vec{\xi})$ and $r_2(\vec{\xi})$ its two eigenvalues, we easily check that:
\begin{equation}\label{eq:facto-symbol-r1r2}
    \det(\sigma) = \left( -\mu \sigma_{\Delta_h} - \rho \omega^2 - (\lambda+\mu) r_1 \right) \left(-\mu \sigma_{\Delta_h} - \rho \omega^2 - (\lambda+\mu) r_2) \right).
\end{equation}
Moreover, we have (after some computations) that
\[
    r_1(\vec{\xi}) = \dfrac{-3+ \cos(\xi_x h) + \cos(\xi_y h) + \cos(\xi_x h ) \cos(\xi_y h)}{h^2} = -\| \vec{\xi} \|^2 + O(h^2)
\]
and 
\[
    r_2(\vec{\xi}) = -\dfrac{ (\cos(\xi_x h) -1) (\cos(\xi_y h) -1)}{h^2} = -\dfrac{\xi_x \xi_y}{4}h^2 + O(h^4).
\]
This shows that in the factorization of the symbol \eqref{eq:facto-symbol-r1r2} the first part 
$$
    -\mu \sigma_{\Delta_h} - \rho \omega^2 - (\lambda+\mu) r_1  =  -(\lambda + 2\mu) \sigma_{\Delta_h} - \rho \omega^2 + O(h^2) 
$$ 
corresponds to P-waves while the second part 
$$
    -\mu \sigma_{\Delta_h} - \rho \omega^2 - (\lambda+\mu) r_2 =  -\mu \sigma_{\Delta_h} - \rho \omega^2 + O(h^2)
$$
to S-waves. Now, noticing that 
\begin{eqnarray*}
 r_1 &=& \sigma(R_1^h),\ \text{with}\ 
 R_1^h v_{i,j} =  \left(\delta_{xx}^h+\delta_{yy}^h + \frac{h^2}{4}\delta^h_{xx}\delta^h_{yy}\right) v_{i,j},
 \\
 r_2 &=& \sigma(R_2^h),\ \text{with}\ R_2^h v_{i,j} =  - \frac{h^2}{4}\delta^h_{xx}\delta^h_{yy} v_{i,j},
\end{eqnarray*}
we can go back to the stencil notations and deduce the two discretization of the scalar Helmholtz equations (with different wavenumbers) corresponding to each part of the symbol. As a result, the discrete S and P waves associated to this FD scheme satisfy the following discrete Helmholtz equations 
\begin{equation}
\begin{split}
\text{P-waves}: \;\; & \left( -(\lambda+2\mu) \Delta_h - \rho \omega^2 - (\lambda+\mu)\frac{h^2}{4}\delta^h_{xx}\delta^h_{yy}  \right),
\\
\text{S-waves}: \;\; & \left(-\mu \Delta_h - \rho \omega^2 + (\lambda+\mu) \frac{h^2}{4}\delta^h_{xx}\delta^h_{yy} \right).
\end{split}
\end{equation}
In this form, it would also be possible to perform dispersion correction for each type of waves. Nevertheless, we did not focus on this part in this work for two main reasons: first numerical tests show that the discretization scheme \eqref{eq:Nilsson_2nd_order_FD} leads to larger error than the MAC sheme, and second we cannot directly rely on previous results on DC for Helmholtz equation to compute the corrected parameters $\widehat{\lambda}$ and $\widehat{\mu}$.

\section{Numerical experiments}\label{sec:Numerics}

To conclude this paper, let us show some numerical experiments and illustrate the impact of the dispersion correction on the MAC scheme. We consider the following problem posed in a bounded domain $\Omega \subset \mathbb{R}^d$:
\begin{equation}\label{eq:Elastic_Helmholtz_inhomogeneous_Lame}
-\textbf{div}\left\{\mu \left(\nabla \vec{u}+ \nabla\vec{u}^T \right)\right\} - \nabla \lambda\mathrm{div}\,\vec{u} 
- \rho\omega^2 \vec{u} = \vec{f}\quad \text{in}\quad \Omega.
\end{equation}
The PDE is equipped with boundary conditions, typically absorbing boundary conditions \cite{engquist1977absorbing,mattesi2019high} or perfectly matched layer \cite{berenger1994perfectly} to model an unbounded domain by minimizaing the reflections from the boundaries. 
\begin{rem}
Equipped with boundary conditions, the existence and uniqueness of a solution to the problem \eqref{eq:Elastic_Helmholtz_inhomogeneous_Lame} can be proved mostly using Fredholm alternative (see e.g. \cite[Section 5]{cocquet2012existence}).
\end{rem}

\subsection{Comparison with analytical solutions}\label{subsec:comp-analytical}
In this first part, we consider the two situations with an analytical solutions. First, we use the manufactured solution 
\begin{equation}\label{sol_analy-1}
\bfu(\vec x) = \begin{pmatrix}(\sin\left(\frac{k_S}{\sqrt{2}} x\right)\cos\left(\frac{k_S}{\sqrt{2}} y\right)\\ -\cos\left(\frac{k_S}{\sqrt{2}} x\right) \sin\left(\frac{k_S}{\sqrt{2}} y\right)\end{pmatrix}
\end{equation}
in two dimensions, and 
\begin{equation}\label{sol_analy-1_3D}
\bfu(\vec x)=
\left(
\begin{array}{c}
k_S \cos(k_S x)\left(-\sin(k_S y)\cos(k_S z)+\cos(k_S y)\sin(k_S z)\right)\\[1ex]
k_S \cos(k_S y)\left(-\sin(k_S z)\cos(k_S x)+\cos(k_S z)\sin(k_S x)\right)\\[1ex]
k_S \cos(k_S z)\left(-\sin(k_S x)\cos(k_S y)+\cos(k_S x)\sin(k_S y)\right)
\end{array}
\right)
\end{equation}
in three dimensions,
where $k_S$ is determined by the angular frequency $\omega$. This $\bfu$ solves the homogeneous elastic Helmholtz equation with null source term. In our numerical tests, we take $\lambda = 1$, $\mu=2$ and the density $\rho=1$, with Dirichlet boundary conditions and the values of $\bfu$ on the boundaries. 

In Figure \ref{fig:OOA}, we compare to the exact solution to the numerical one and present the truncation errors with and without dispersion correction for three grid sizes.

We compare to the exact solution and present the truncation errors with and without dispersion correction for three grid sizes.
We observe that the order of accuracy is two, since the resulting curves are parallel to a line with slope 2.
We also observe a significant reduction in the truncation error. 
We note that this reduction is present at a relatively large frequency (for the chosen grid sizes), since the dispersion correction is most efficient when the manufactured solution is highly oscillatory.
We note that there is a nice agreement of trends between Figure \ref{fig:phase_err} in two dimensions and in thee dimensions, showing the theoretically expected errors, and Figure \ref{fig:OOA}, showing the errors in practice for an oscillatory manufactured solution in two dimensions.

\begin{figure}
\begin{center}
	\newcommand{\image}[1]{\includegraphics[width=0.49\linewidth]{./#1}}
\subfigure[\footnotesize 2D]{\image{OOA.eps}\label{fig:OOA_2D}}
\subfigure[\footnotesize 3D]{\image{OOA_3D.eps}\label{fig:OOA_3D}}\\
\end{center}
\caption{Errors for three different grid sizes with and without dispersion correction for the constant Lam{\'e} coefficients and density $\lambda=\mu=\rho=1$, presented in logarithmic scale.
On the left, for two dimensions, we use $\omega =15\pi$, grid sizes $64^2,128^2$ and $256^2$ cells, and the manufactured solution \eqref{sol_analy-1}.
On the right, for three dimensions, we use $\omega =5\pi$, grid sizes $32^3,48^3$ and $64^3$ cells, and the manufactured solution \eqref{sol_analy-1_3D}.
}\label{fig:OOA}
\end{figure}

To quantitatively assess the reduction in the truncation error for different frequencies, we present in Table \ref{tab:OOA} the ratio of the truncation error with and without dispersion correction for different numbers of grid points per wavelength and different frequencies $\omega$. As we can see, dispersion correction allows one to reduce the error by a factor close to $2$ for large $G_S$. Also, one can notice that it is more interesting for large frequencies.

\begin{table}
\centering
\begin{tabular}{|l|l|l|l|l|l|}
  \hline
  \multicolumn{6}{|c|}{$ \mathbf{\omega = 20}$} \\
  \hline
  $G_S$ & $4.78$ & 	$9.91$ & 	$14.70$ & 	$19.82$ & 	$29.73$ \\ 
  \hline
  Err. No DC & $0.9514$ &	$0.1702$ &	$0.0778$ &	$0.0433$ &	$0.0194$ \\ 
  Err. With DC & $0.3957$ &	$0.0850$ &	$0.0389$ &	$0.0217$ &	$0.0097$ \\ 
  \hline
  Ratio & $\mathbf{2.4044}$ &	$\mathbf{2.0024}$ &	$\mathbf{1.9991}$ &	$\mathbf{1.9984}$ &	$\mathbf{1.9983}$ \\ 
  \hline
  \hline 
  \multicolumn{6}{|c|}{$ \mathbf{\omega = 50}$} \\
  \hline
  $G_S$ & $4.92$ & 	$9.98$ & 	$14.90$ & 	$19.96$ & 	$29.94$ \\ 
  \hline
  Err. No DC & $1.9833$ &	$0.8065$ &	$0.7750$ &	$0.1596$ &	$0.0562$\\ 
  Err. With DC & $0.5413$ &	$0.1661$ &	$0.0848$ &	$0.0503$ &	$0.0237$ \\ 
  \hline
  Ratio & $\mathbf{3.6641}$ &	$\mathbf{4.8547}$ &	$\mathbf{9.1430}$ &	$\mathbf{3.1715}$ &	$\mathbf{2.3698}$ \\ 
  \hline
  \hline
  \multicolumn{6}{|c|}{$ \mathbf{\omega = 100}$} \\
  \hline
  $G_S$ & $4.99$ & 	$9.98$ & 	$14.97$ & 	$19.96$ & 	$29.94$  \\ 
  \hline
  Err. No DC & $37.0736$ &	$1.9183$ &	$3.7506$ &	$0.4423$ &	$0.1318$\\ 
  Err. With DC & $2.3817$ &	$0.4149$ &	$0.1976$ &	$0.1176$ &	$0.0551$\\ 
  \hline
  Ratio & $\mathbf{15.5659}$ &	$\mathbf{4.6235}$ &	$\mathbf{18.9848}$ &	$\mathbf{3.7601}$ &	$\mathbf{2.3907}$ \\ 
  \hline
\end{tabular}
\caption{Error for various $\omega \in \{10, 50,100\}$ with or without dispersion correction (the error is truncated after $10^{-5}$). The ratio between the error with and without dispersion correction is calculated as $\text{Error without DC} / \text{Error with DC}$, so that values $>1$ mean that the error with dispersion correction is lower that the one without (and are marked in bold).}
\label{tab:OOA}
\end{table}

Let us now show a second case where we have an analytical solution given by the Green function of the PDE. We consider the particular situation of a circular S-waves. More precisely, we recall that in $\mathbb{R}^2\setminus \{ \mathbf{0}\}$, the function:
\begin{equation}
    \mathbf{u}_{ex}(x,y) = \dfrac{H'(k_S r)}{r}\left[ \begin{matrix}
        -y \\ x
    \end{matrix} \right]  
\end{equation}
where $r = \sqrt{x^2 + y^2}$ and $H$ is the Hankel function of the first kind, is solution to the homogeneous equations \eqref{eq:split}. Therefore, the following problem: Find $\mathbf{u}\in \left[H^1(\Omega)\right]^2$ satisfying
\begin{equation}\label{pb:Swaves}
\left\{\begin{array}{lcl}
     -\mu \mathbf{\Delta} \mathbf{u} - (\lambda+\mu) \nabla \text{div} (\mathbf{u}) - \rho \omega^2 \mathbf{u} = \mathbf{0} & \text{in} & \Omega \\[5pt]
     \sigma(\mathbf{u})\nu + \ii \boldsymbol{\gamma}(\boldsymbol{n}) \; \mathbf{u}  = \sigma(\mathbf{u}_{ex})\nu + \ii \boldsymbol{\gamma}(\boldsymbol{n}) \; \mathbf{u}_{ex}  &  \text{on} &  \partial \Omega. 
\end{array}   \right.
\end{equation}
has $\mathbf{u} = \mathbf{u}_{ex}$ as (unique) solution. For the boundary conditions (BC), we consider Robin type BCs which model Absorbing BCs (``Kupradze ABC''). In that case, this amounts to take for the matrix $\boldsymbol{\gamma}$:
\[
    \boldsymbol{\gamma}(\boldsymbol{n}) = \left[ \begin{matrix} \omega (\sqrt{\lambda+2\mu} \, n_1 + \sqrt{\mu} \, n_2) & 0 \\ 0 &  \omega (\sqrt{\mu} \, n_1 + \sqrt{\lambda+2\mu} \, n_2) \end{matrix} \right]
\]
where $\boldsymbol{n} = (n_1, n_2)$ is the outward normal. 

In our numerical tests, we take $\Omega = [-1,1]^2\setminus [-0.125,0.125]^2$ so that we avoid dealing with the singularity of the Hankel at the origin. For the physical parameters, we take $\rho = 1$, $\lambda = 16$, $\mu = 2$ and $\omega = 50$. We have represented in Figure \ref{fig:illus-DC-S-Waves} the numerical solution to \eqref{pb:Swaves} using DC or not, for two values of the discretization step. Also, in Figure \ref{fig:illus-DC-S-Waves-conv}, we have represented the convergence curve of the error versus the discretization step, again with or without DC. In both cases, we can see the interest of the DC.  
\begin{figure}[h]
\begin{center}
	\newcommand{\image}[1]{\includegraphics[width=0.45\linewidth]{./#1}}
    \subfigure[\footnotesize Case $n=30$]{\image{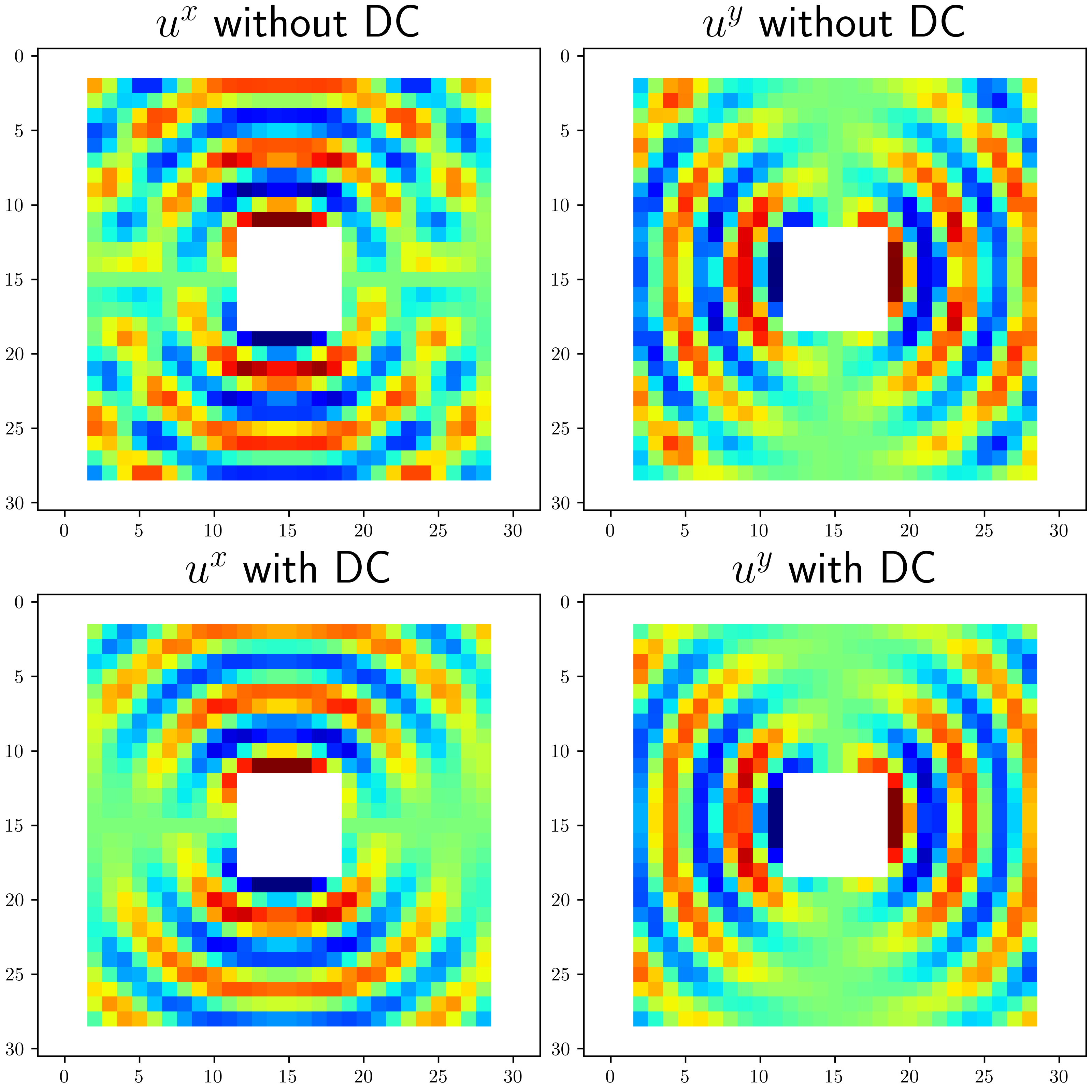}\label{fig:uxuy-n=30}} \quad
    \subfigure[\footnotesize Case $n=100$]{\image{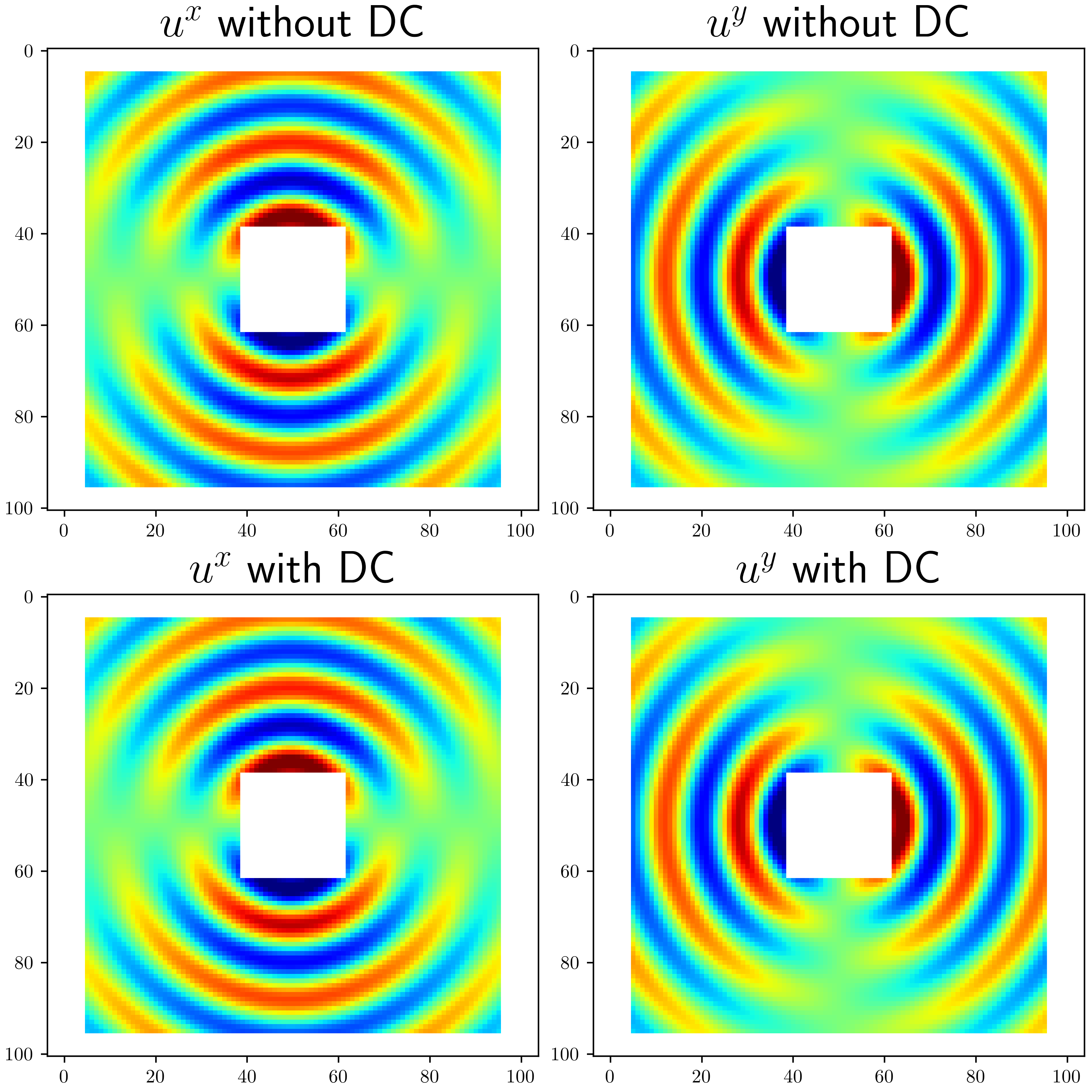}\label{fig:uxuy-n=100}}
\\
\end{center}
\caption{Example with analytical spherical S-waves. First line: without dispersion correction. Second line: with dispersion correction.
}\label{fig:illus-DC-S-Waves}
\end{figure}
\begin{figure}[h]
\begin{center}
    \includegraphics[width=4cm]{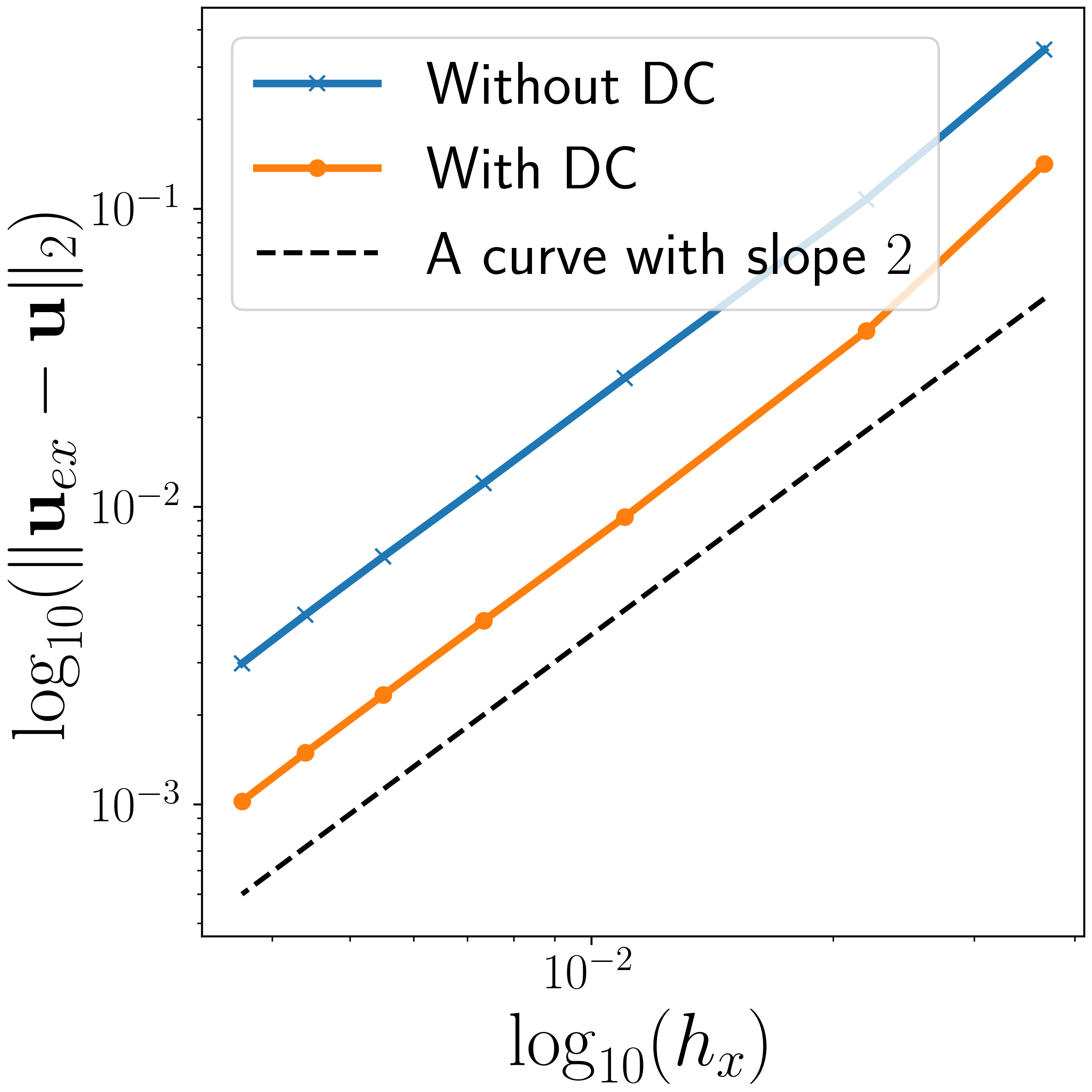}\qquad
    \includegraphics[width=4cm]{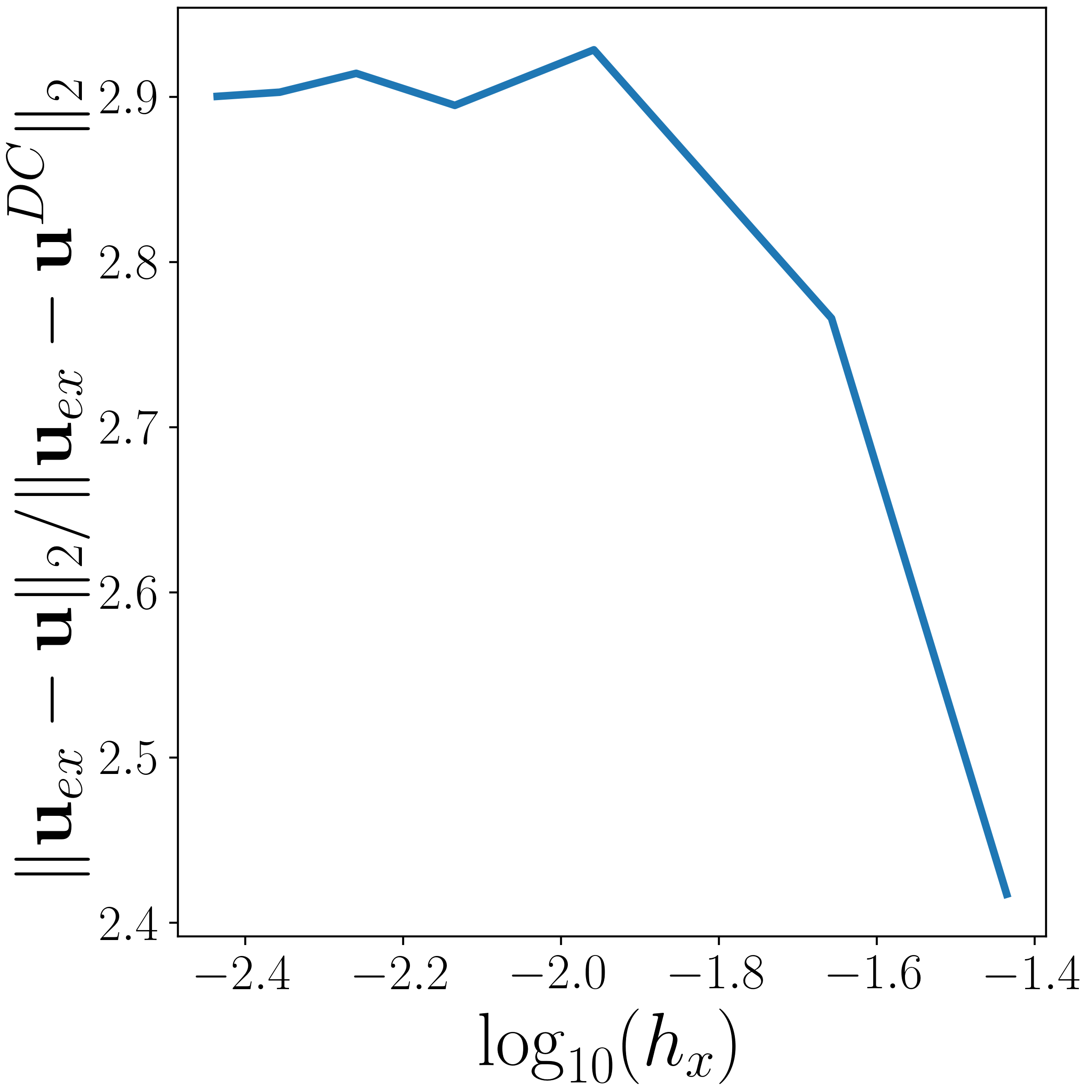}
    \caption{On the left: convergence curve without dispersion correction (blue curve) and with dispersion correction (orange curve). On the right: ratio between the error with and without dispersion correction where values $>1$ mean that the error with dispersion correction is lower that the one without.
    }
\end{center}
\end{figure}\label{fig:illus-DC-S-Waves-conv}

To assess the impact of the frequency, we have also tested for various $\omega$ the impact of DC on the same configuration. The results are reported in Table \ref{tab:Green_ww}. As we can see, for low frequency, DC is useless but it is very interesting for moderate or large frequencies, which are the cases of interest. It allows to divide the error by $3$ (at no additional cost, just by using the modified coefficients $\hat{\lambda}$ and $\hat{\mu}$).

\begin{table}
\centering
\begin{tabular}{|l|l|l|l|l|l|}
  \hline
  \multicolumn{6}{|c|}{$ \mathbf{\omega = 20}$} \\
  \hline
  $G_S$ & $4.78$ & 	$9.91$ & 	$14.70$ & 	$19.82$ & 	$29.73$ \\ 
  \hline
  Err. No DC & $0.2386$ &	$0.0880$ &	$0.0179$ &	$0.0110$ &	$0.0061$\\ 
  Err. With DC & $0.0678$ &	$0.0771$ &	$0.0173$ &	$0.0104$ &	$0.0057$ \\ 
  \hline
  Ratio & $\mathbf{3.5179}$ &	$1.1417$ &	$1.0349$ &	$1.0545$ &	$1.0685$ \\ 
  \hline
  \hline 
  \multicolumn{6}{|c|}{$ \mathbf{\omega = 50}$} \\
  \hline
  $G_S$ & $4.85$ & 	$9.86$ & 	$14.86$ & 	$19.87$ & 	$29.89$ \\ 
  \hline
  Err. No DC & $0.3420$ &	$0.0737$ &	$0.0325$ &	$0.0179$ &	$0.0078$\\ 
  Err. With DC & $0.1415$ &	$0.0253$ &	$0.0113$ &	$0.0061$ &	$0.0027$ \\ 
  \hline
  Ratio & $\mathbf{2.4172}$ &	$\mathbf{2.914}$ &	$\mathbf{2.8661}$ &	$\mathbf{2.924}$ &	$\mathbf{2.9052}$ \\ 
  \hline
  \hline
  \multicolumn{6}{|c|}{$ \mathbf{\omega = 100}$} \\
  \hline
  $G_S$ & $4.93$ & 	$9.94$ & 	$14.94$ & 	$19.95$ & 	$29.97$  \\ 
  \hline
  Err. No DC & $0.5296$ &	$0.1090$ &	$0.0459$ &	$0.0255$ &	$0.0114$ \\ 
  Err. With DC & $0.1518$ &	$0.0302$ &	$0.0135$ &	$0.0074$ &	$0.0032$ \\ 
  \hline
  Ratio & $\mathbf{3.4885}$ &	$\mathbf{3.6101}$ &	$\mathbf{3.4079}$ &	$\mathbf{3.4385}$ &	$\mathbf{3.5331}$ \\ 
  \hline
\end{tabular}
\caption{Error for various $\omega \in \{10, 50, 80, 100\}$ with or without dispersion correction (the error is truncated after $10^{-5}$). The ratio between the error with and without dispersion correction is calculated as before (see Table \ref{tab:OOA}, so that values $>1$ mean that the error with dispersion correction is lower that the one without (and are marked in bold).}
\label{tab:Green_ww}
\end{table}

\subsection{Effect on a multigrid algorithm}

We investigate here the effect of the dispersion correction on the behavior of the multigrid (MG) algorithm. 
This is motivated by some previous works which deal with the Helmholtz equation (see e.g. 
\cite{stolk2014multigrid,stolk2025two,cocquet2021closed,ernst2013multigrid,cocquet2018dispersion}) and show that reducing the dispersion error enhance the convergence of the MG method.

First we describe the multigrid setup. 
We apply geometric multigrid with re-discretized coarse operators to the discrete elastic Helmholtz equation in mixed formulation
\begin{equation}\label{eq:Elastic_Helm_mixed}
  \begin{pmatrix}
        -\mathbf{div}\left(\mu\nabla\right) -\rho\omega^2 & \nabla \\
        -\mathbf{div} & -\frac{1}{\lambda+\mu}
    \end{pmatrix}
    \begin{pmatrix}
        \vec u \\
        p
    \end{pmatrix}
    =
    \begin{pmatrix}
    \vec{f}\\
        0
    \end{pmatrix}
\end{equation}
which is equivalent to \eqref{eq:elastic_Helm_gradiv_formulation}, see \cite{treister2024hybrid}.
It was shown there that the mixed formulation enables faster solution using shifted Laplacian multigrid preconditioner.
It was also shown there that using the mixed formulation together with Vanka cell-wise smoothing, the method is scalable w.r.t. the Poisson ratio, nevertheless, a scallabiliy study that separates the formulation and the smoother was not performed. 
We use the intergrid operators suggested in \cite{treister2024hybrid}, that is, bilinear interpolation and full-weighting restriction (see \cite{trottenberg2000multigrid}) for the nodal direction, and biquadratic interpolation and nearest-neighbor restriction for the cell-centered direction. 
To demonstrate the efficiency of the dispersion correction, we use re-discretized coarse operators, where the Lam{\'e} coefficients are shifted according to Theorem \ref{thm:Asymptotic_shift_2nd_order}.
It was shown in \cite{yovel2024lfa} that discretizations with wider stencils can reduce the dispersion and improve multigrid convergence in a re-discretized monolithic multigrid method for the elastic Helmholtz equation.
Here, we show that even with a 5-point stencil significant improvement in multigrid convergence can be achieved.
Nevertheless, it is worth noting that block-preconditioned multigrid might be more efficient than monolithic multigrid for this system of equations, see \cite{yovel2024block}, and that Galerkin coarse approximation is generally more stable than re-discretization. 
We leave the application of dispersion correction to the Galerkin operator as a topic for future research.

In Figure \ref{fig:mg_conv}, we solve the elastic Helmholtz equation in mixed formulation \eqref{eq:Elastic_Helm_mixed}, discretized with the MAC scheme\footnote{The MAC scheme described in Section \ref{sec:mac} can be similarly applied to the mixed formulation, see detailed description in \cite{yovel2024block}.} in the dimensionless domain $[0,1]\times[0,1]$ with Neumann boundary conditions equiped with an absorbing boundary layer of increasing attenuation of 20 cells.
We solve the problem using multigrid two-level cycles with 2 pre- and 2 post-relaxations using damped Jacobi or 1 pre- and 1 post-relaxation of cell-wise Vanka (as shown in \cite{yovel2024lfa}, Vanka smoothing has the cost of about two Jacobi relaxations, so the methods are comparable).
We use Lam{\'e} coefficients are $\lambda=\mu=1$ and the density is $\rho=1$, yielding Poisson ratio $0.25$, and the same $\mu$ and $\rho$ with $\lambda=4$ yielding Poisson ratio $0.4$.
For the case of linear media, the Lam{\'e} coefficients and the density vary linearly in the ranges $\rho\in[2,3],$ $\lambda\in[4,20]$ and $\mu\in[1,15]$, yielding a Poisson ratio that ranges in $[0.105,0.476].$
The damping parameters were chosen by trial and error to be 0.4 for Jacobi and $0.6$ for cell-wise Vanka. 
Since we use multigrid with no complex shift (that is, no added artificial attenuation), the convergence is relatively slow.
We note that our aim in this paper is \emph{not} to design the perfect multigrid cycle, but to compare the performance of a cycle with standard components with and without dispersion correction.
To test the dependence of the convergence on the number of grid points per shear wavelength $G_s$ on the fine grid, we fix a constant angular frequency of $\omega = 21.33\pi$ for the homogeneous media and $\omega=15.085\pi$ for the linear media, and vary $G_s$ through the grid size.
Figure \ref{fig:mg_conv_const} shows that the dispersion correction improves the convergence significantly, and enables convergence even in $G_s$ values that would otherwise lead to divergent method. 
Nevertheless, in this case the difference between Jacobi and Vanka is not significant.
Figure \ref{fig:mg_conv_const_high_Poisson} shows that the Jacobi smoother does not converge for the higher Poisson ratio case. 
This result is interesting since it implies that in the monolithic multigrid framework, Vanka smoothing is necessary to achieve scalability with respect to the Poisson ratio.
Similar improvement by the dispersion correction is shown for linear media in Fig. \ref{fig:mg_conv_linear}, despite the fact that our assymptotical calculations in the proof of Theorem \ref{thm:Asymptotic_shift_2nd_order} assume constant coefficients.
Finally, Fig. \ref{fig:mg_conv_const_3D} shows that for the damped Jacobi smoother, the analogous three-dimensional dispersion correction from Theorem \ref{thm:Asymptotic_shift_2nd_order} behaves the same as in Fig. \ref{fig:mg_conv_const}.

\begin{figure}
\begin{center}
	\newcommand{\image}[1]{\includegraphics[width=0.49\linewidth]{./#1}}
   \subfigure[\footnotesize 2D homogeneous media, Poisson ratio 0.25]{\image{mg_conv_const.eps}\label{fig:mg_conv_const}}
   \subfigure[\footnotesize 2D homogeneous media, Poisson ratio 0.4]{\image{mg_conv_const_high_Poisson.eps}\label{fig:mg_conv_const_high_Poisson}}  
   \subfigure[\footnotesize 2D linear media]{\image{mg_conv_linear.eps}\label{fig:mg_conv_linear}}
   \subfigure[\footnotesize 3D homogeneous media, Poisson ratio 0.25]{\image{mg_conv_const_3D.eps}\label{fig:mg_conv_const_3D}}\\
\end{center}
\caption{On the top left, two-level multigrid convergence with or without dispersion correction, for the Helmholtz equation in constant coefficients $\lambda = \mu = \rho = 1$ with frequency $\omega = 21.33\pi$ and varying grid size that gives different number of grid points per shear wavelength $G_s$ on the fine grid. 
On the top right, the same experiment is repeated with $\lambda=4$.
On the bottom left, a similar experiment is repeated for linear media with $\rho\in[2,3],$ $\lambda\in[4,20]$ and $\mu\in[1,15]$ for frequency $\omega = 15.08\pi,$ and on the bottom right, a similar three-dimensional experiment is repeated for homogeneous media with $\rho=\lambda=\mu=1$ and $\omega=8\pi.$
}\label{fig:mg_conv}
\end{figure}

    \subsection{A medium with heterogeneous density}

To finish this section on numerical results, let us consider a $2$D example with varying density $\rho = \rho(x,y)$. To apply DC, we simply use the obtained formula in the homogeneous case point-wise, that is to say, we take:
\[
    \widehat{\lambda}(x,y) = \lambda -h^2\dfrac{\rho(x,y) \omega^2}{16} \quad \text{and} \quad \widehat{\mu}(x,y) = \mu + h^2\dfrac{\rho(x,y) \omega^2}{16}.
\]
For our numerical test, we used a density inspired by the Marmousi model \footnote{
We note that \cite{martin2006marmousi2} suggests the Marmousi2 model, a physically valid elastic extension of the original Marmousi model \cite{brougois1990marmousi} with variable $\lambda$ and $\mu$.
We simply used the original Marmousi model only for varying $\rho$, since the fully variable elastic case, with jumping Lam{\'e} coefficients, is not feasible for our asymptotic analysis.
}. 
In Figure \ref{fig:illus-DC-S-Waves-Marmousi}, we illustrate the variable density $\rho$ and the results obtained with or without DC. These results have been obtained with $\omega = 70$, $\rho \in [1,6]$, $\lambda = 16$, $\mu = 2$ and $h := h_{\rm min} = \frac{1}{800}$. The computational domain is a rectangle $[0,2] \times [0,1]$. For the source term, we take a localized shear-waves centered in the middle of the computational domain. For the boundary conditions, we used simple ABC as above in subsection \ref{subsec:comp-analytical}.
\begin{figure}[h]
\begin{center}
    \includegraphics[width=12cm]{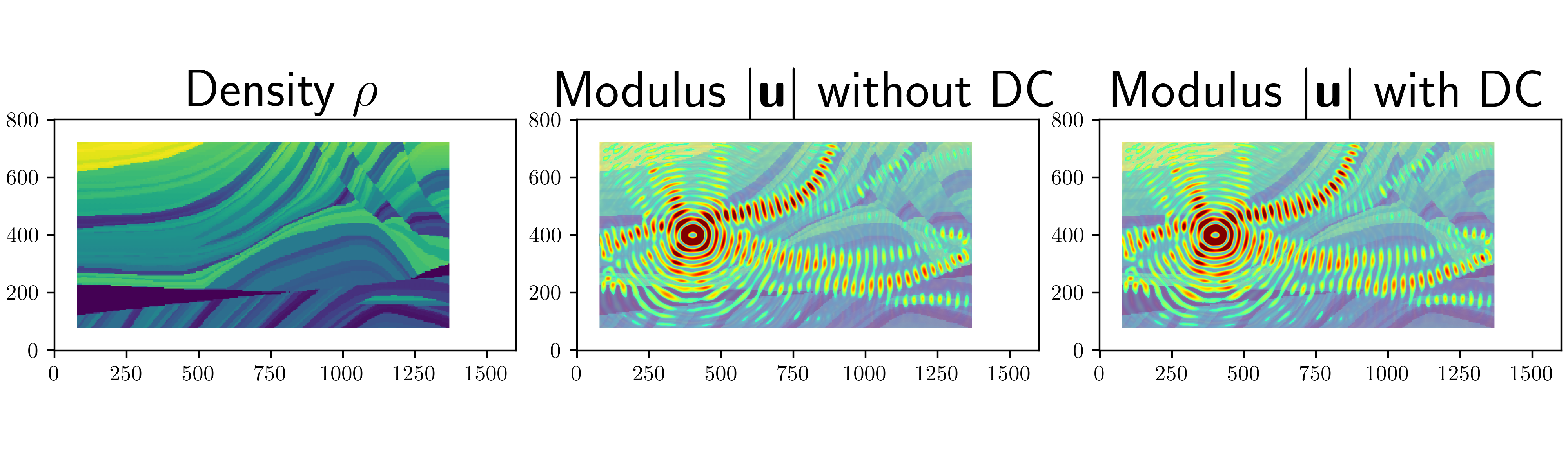}
    \caption{From left to right: density $\rho$, the solution obtained with DC and the solution obtained without DC.}
\end{center}
\end{figure}\label{fig:illus-DC-S-Waves-Marmousi}

Taking the case without DC as reference solution, we have computed the error as before (see subsection \ref{subsec:comp-analytical}) for various values of the discretization step $h$. The results are reported in Table \ref{tab:Marmousi_ww}. We can observe that, although the medium is heterogeneous, DC can notably improve the results for reasonable $h$. Let us emphasize that it is expected that ratio deteriorates as $h$ tends to $h_{\rm min}$ since we take as reference solution the solution without DC (and we therefore have a null error without DC for $h = h_{\rm min}$). Also, for small $h$ ($=32h_{\rm min}$ or $h=16 h_{\rm min}$) and $\omega = 70$, both solutions with or without DC are completly wrong so the error computation is not relevant (we have not a sufficient number of points per wavelength, especially in the region where $\rho$ is closed to $6$).  
\begin{table}
\centering
\begin{tabular}{|l|l|l|l|l|l|}
  \hline
  \multicolumn{6}{|c|}{$ \mathbf{\omega = 50}$} \\
  \hline
  $h/h_{\rm min}$ & $32$ & 	$16$ & 	$8$ & 	$4$ & 	$2$ \\
  \hline
  Err. No DC & $0.4028$ &	$0.0623$ &	$0.0243$ &	$0.0083$ &	$0.0022$\\ 
  Err. With DC & $0.137$ &	$0.0314$ &	$0.0136$ &	$0.0059$ &	$0.0022$ \\ 
  \hline
  Ratio & $\mathbf{2.9405}$ &	$\mathbf{1.9863}$ &	$\mathbf{1.7844}$ &	$\mathbf{1.4067}$ &	$0.9836$ \\ 
  \hline
  \hline 
  \multicolumn{6}{|c|}{$ \mathbf{\omega = 70}$} \\
  \hline
  $h/h_{\rm min}$ & $32$ & 	$16$ & 	$8$ & 	$4$ & 	$2$ \\ 
  \hline
  Err. No DC & $0.0409$ &	$0.0574$ &	$0.0330$ &	$0.0179$ &	$0.0035$\\ 
  Err. With DC & $0.1252$ &	$0.0527$ &	$0.013$ &	$0.0056$ &	$0.0018$ \\ 
  \hline
  Ratio & $0.3262$ &	$1.088$ &	$\mathbf{2.5262}$ &	$\mathbf{3.2154}$ &	$\mathbf{1.9157}$ \\ 
  \hline
\end{tabular}
\caption{Error for various $\omega \in \{50, 70\}$ with or without dispersion correction (the error is truncated after $10^{-5}$). The ratio between the error with and without dispersion correction is calculated as before (see Table \ref{tab:OOA}, so that values $>1$ mean that the error with dispersion correction is lower that the one without (and are marked in bold). We also recall that $h_{\rm min}$ corresponds to the discretization step for the reference solution computed without DC.}
\label{tab:Marmousi_ww}
\end{table}

\section{Conclusion and perspectives}

We have developed a new asymptotic dispersion correction for the MAC discretization of the isotropic elastic Helmholtz equation.
The correction is derived from an asymptotic analysis of the discrete dispersion relation and significantly reduces the leading-order phase error while preserving the structure of the original discretization.
As part of the analysis, we also introduced a decomposition and classification of dispersion relations for a broad class of elastic discretizations, providing a unified framework for their study.
Numerical experiments in two and three dimensions confirm the improved dispersion properties of the corrected MAC scheme and its favorable impact on multigrid convergence.

Several directions remain for future work.
A theoretical analysis establishing that the proposed dispersion correction indeed reduces the relative error would provide a rigorous complement to the present study.
Another natural extension is the development of dispersion corrections for anisotropic elasticity.
It would also be of interest to extend the approach to heterogeneous media by taking into account spatial variations of the material coefficients, including their gradients.
Finally, we leave the application of the proposed dispersion correction to the Galerkin operator as a topic for future research.

\bibliographystyle{siam}

\bibliography{AsymptoticDispersionElastic.bbl}

\begin{thebibliography}{10}

\bibitem{albella2018solving}
{\sc J.~Albella~Mart{\'\i}nez, S.~Imperiale, P.~Joly, and J.~Rodr{\'\i}guez},
  {\em Solving 2d linear isotropic elastodynamics by means of scalar
  potentials: a new challenge for finite elements}, Journal of Scientific
  Computing, 77 (2018), pp.~1832--1873.

\bibitem{babuska1997pollution}
{\sc I.~M. Babuska and S.~A. Sauter}, {\em Is the pollution effect of the fem
  avoidable for the helmholtz equation considering high wave numbers?}, SIAM
  Journal on numerical analysis, 34 (1997), pp.~2392--2423.

\bibitem{baronian2018linear}
{\sc V.~Baronian, L.~Bourgeois, B.~Chapuis, and A.~Recoquillay}, {\em Linear
  sampling method applied to non destructive testing of an elastic waveguide:
  theory, numerics and experiments}, Inverse Problems, 34 (2018), p.~075006.

\bibitem{berenger1994perfectly}
{\sc J.-P. Berenger}, {\em A perfectly matched layer for the absorption of
  electromagnetic waves}, Journal of Computational Physics, 114 (1994),
  pp.~185--200.

\bibitem{blitz1995ultrasonic}
{\sc J.~Blitz and G.~Simpson}, {\em Ultrasonic methods of non-destructive
  testing}, vol.~2, Springer Science \& Business Media, 1995.

\bibitem{bramble2008note}
{\sc J.~H. Bramble and J.~E. Pasciak}, {\em A note on the existence and
  uniqueness of solutions of frequency domain elastic wave problems: a priori
  estimates in h1}, Journal of mathematical analysis and applications, 345
  (2008), pp.~396--404.

\bibitem{brougois1990marmousi}
{\sc A.~Brougois, M.~Bourget, P.~Lailly, M.~Poulet, P.~Ricarte, and
  R.~Versteeg}, {\em Marmousi, model and data}, in EAEG workshop-practical
  aspects of seismic data inversion, European Association of Geoscientists \&
  Engineers, 1990, pp.~cp--108.

\bibitem{burel2012solving}
{\sc A.~Burel, S.~Imperiale, and P.~Joly}, {\em Solving the homogeneous
  isotropic linear elastodynamics equations using potentials and finite
  elements. the case of the rigid boundary condition}, Numerical Analysis and
  Applications, 5 (2012), pp.~136--143.

\bibitem{chen2012dispersion}
{\sc Z.~Chen, D.~Cheng, and T.~Wu}, {\em A dispersion minimizing finite
  difference scheme and preconditioned solver for the {3D} {Helmholtz}
  equation}, Journal of Computational Physics, 231 (2012), pp.~8152--8175.

\bibitem{cheng2017dispersion}
{\sc D.~Cheng, X.~Tan, and T.~Zeng}, {\em A dispersion minimizing finite
  difference scheme for the helmholtz equation based on point-weighting},
  Computers \& Mathematics with Applications, 73 (2017), pp.~2345--2359.

\bibitem{cocquet2024asymptotic}
{\sc P.-H. Cocquet and M.~J. Gander}, {\em Asymptotic dispersion correction in
  general finite difference schemes for helmholtz problems}, SIAM Journal on
  Scientific Computing, 46 (2024), pp.~A670--A696.

\bibitem{cocquet2025improving}
\leavevmode\vrule height 2pt depth -1.6pt width 23pt, {\em Improving yee’s
  scheme with asymptotic dispersion correction for time-harmonic maxwell’s
  equations}, Journal of Computational Physics,  (2025), p.~114602.

\bibitem{cocquet2018dispersion}
{\sc P.-H. Cocquet, M.~J. Gander, and X.~Xiang}, {\em Dispersion correction for
  helmholtz in 1d with piecewise constant wavenumber}, in International
  Conference on Domain Decomposition Methods, Springer, 2018, pp.~359--366.

\bibitem{cocquet2021closed}
\leavevmode\vrule height 2pt depth -1.6pt width 23pt, {\em Closed form
  dispersion corrections including a real shifted wavenumber for finite
  difference discretizations of 2d constant coefficient helmholtz problems},
  SIAM Journal on Scientific Computing, 43 (2021), pp.~A278--A308.

\bibitem{cocquet2012existence}
{\sc P.-H. Cocquet, P.-A. Mazet, and V.~Mouysset}, {\em On the existence and
  uniqueness of a solution for some frequency-dependent partial differential
  equations coming from the modeling of metamaterials}, SIAM Journal on
  Mathematical Analysis, 44 (2012), pp.~3806--3833.

\bibitem{engquist1977absorbing}
{\sc B.~Engquist and A.~Majda}, {\em Absorbing boundary conditions for
  numerical simulation of waves}, Proceedings of the National Academy of
  Sciences, 74 (1977), pp.~1765--1766.

\bibitem{ernst2013multigrid}
{\sc O.~G. Ernst and M.~J. Gander}, {\em Multigrid methods for helmholtz
  problems: A convergent scheme in 1d using standard components}, Direct and
  Inverse Problems in Wave Propagation and Applications. De Gruyer,  (2013),
  pp.~135--186.

\bibitem{galkowski2023does}
{\sc J.~Galkowski and E.~A. Spence}, {\em Does the helmholtz boundary element
  method suffer from the pollution effect?}, Siam Review, 65 (2023),
  pp.~806--828.

\bibitem{gander2026fourier}
{\sc M.~J. Gander, H.~Zhang, and H.~Zhou}, {\em Fourier analysis of finite
  difference schemes for the helmholtz equation in 1d with dirichlet
  conditions: Sharp estimates and relative errors}, Numerical Algorithms,
  (2026), pp.~1--54.

\bibitem{gercek2007poisson}
{\sc H.~Gercek}, {\em Poisson's ratio values for rocks}, International Journal
  of Rock Mechanics and Mining Sciences, 44 (2007), pp.~1--13.

\bibitem{gosselin20143d}
{\sc B.~Gosselin-Cliche and B.~Giroux}, {\em {3D} frequency-domain
  finite-difference viscoelastic-wave modeling using weighted average 27-point
  operators with optimal coefficients}, Geophysics, 79 (2014), pp.~T169--T188.

\bibitem{harari2011stabilized}
{\sc I.~Harari, R.~Ganel, and E.~Grosu}, {\em Stabilized finite elements for
  time-harmonic elastic waves}, Computer methods in applied mechanics and
  engineering, 200 (2011), pp.~1774--1786.

\bibitem{ihlenburg1995dispersion}
{\sc F.~Ihlenburg and I.~Babu{\v{s}}ka}, {\em Dispersion analysis and error
  estimation of galerkin finite element methods for the helmholtz equation},
  International journal for numerical methods in engineering, 38 (1995),
  pp.~3745--3774.

\bibitem{jo1996optimal}
{\sc C.-H. Jo, C.~Shin, and J.~H. Suh}, {\em An optimal 9-point,
  finite-difference, frequency-space, {2-D} scalar wave extrapolator},
  Geophysics, 61 (1996), pp.~529--537.

\bibitem{levander1988fourth}
{\sc A.~R. Levander}, {\em Fourth-order finite-difference {P-SV} seismograms},
  Geophysics, 53 (1988), pp.~1425--1436.

\bibitem{liu2015optimized}
{\sc Z.-l. Liu, P.~Song, J.-s. Li, J.~Li, and X.-b. Zhang}, {\em An optimized
  implicit finite-difference scheme for the two-dimensional helmholtz
  equation}, Geophysical Journal International, 202 (2015), pp.~1805--1826.

\bibitem{martin2006marmousi2}
{\sc G.~S. Martin, R.~Wiley, and K.~J. Marfurt}, {\em Marmousi2: An elastic
  upgrade for {Marmousi}}, The Leading Edge, 25 (2006), pp.~156--166.

\bibitem{martin2006multiple}
{\sc P.~A. Martin}, {\em Multiple scattering: interaction of time-harmonic
  waves with N obstacles}, no.~107, Cambridge University Press, 2006.

\bibitem{mattesi2019high}
{\sc V.~Mattesi, M.~Darbas, and C.~Geuzaine}, {\em A high-order absorbing
  boundary condition for 2d time-harmonic elastodynamic scattering problems},
  Computers \& Mathematics with Applications, 77 (2019), pp.~1703--1721.

\bibitem{nilsson2007stable}
{\sc S.~Nilsson, N.~A. Petersson, B.~Sj{\"o}green, and H.-O. Kreiss}, {\em
  Stable difference approximations for the elastic wave equation in second
  order formulation}, SIAM Journal on Numerical Analysis, 45 (2007),
  pp.~1902--1936.

\bibitem{operto2009finite}
{\sc S.~Operto, J.~Virieux, A.~Ribodetti, and J.~E. Anderson}, {\em
  Finite-difference frequency-domain modeling of viscoacoustic wave propagation
  in 2d tilted transversely isotropic (tti) media}, Geophysics, 74 (2009),
  pp.~T75--T95.

\bibitem{rui2018locking}
{\sc H.~Rui and M.~Sun}, {\em A locking-free finite difference method on
  staggered grids for linear elasticity problems}, Computers \& Mathematics
  with Applications, 76 (2018), pp.~1301--1320.

\bibitem{sjogreen2012fourth}
{\sc B.~Sj{\"o}green and N.~A. Petersson}, {\em A fourth order accurate finite
  difference scheme for the elastic wave equation in second order formulation},
  Journal of Scientific Computing, 52 (2012), pp.~17--48.

\bibitem{spence2023simple}
{\sc E.~A. Spence}, {\em A simple proof that the hp-fem does not suffer from
  the pollution effect for the constant-coefficient full-space helmholtz
  equation}, Advances in Computational Mathematics, 49 (2023), p.~27.

\bibitem{stolk2025two}
{\sc C.~C. Stolk}, {\em A two-grid method with dispersion matching for
  finite-element helmholtz problems: C. stolk}, Advances in Computational
  Mathematics, 51 (2025), p.~43.

\bibitem{stolk2014multigrid}
{\sc C.~C. Stolk, M.~Ahmed, and S.~K. Bhowmik}, {\em A multigrid method for the
  {Helmholtz} equation with optimized coarse grid corrections}, SIAM Journal on
  Scientific Computing, 36 (2014), pp.~A2819--A2841.

\bibitem{treister2024hybrid}
{\sc E.~Treister and R.~Yovel}, {\em A hybrid shifted {Laplacian} multigrid and
  domain decomposition preconditioner for the elastic {Helmholtz} equations},
  Journal of Computational Physics, 497 (2024), p.~112622.

\bibitem{trottenberg2000multigrid}
{\sc U.~Trottenberg, C.~Oosterlee, and A.~Sch\"{u}ller}, {\em Multigrid},
  Academic Press, London and San Diego, 2001.

\bibitem{virieux2011review}
{\sc J.~Virieux, H.~Calandra, and R.-{\'E}. Plessix}, {\em A review of the
  spectral, pseudo-spectral, finite-difference and finite-element modelling
  techniques for geophysical imaging}, Geophysical Prospecting, 59 (2011),
  pp.~794--813.

\bibitem{wu2014dispersion}
{\sc T.~Wu and Z.~Chen}, {\em A dispersion minimizing subgridding finite
  difference scheme for the helmholtz equation with pml}, Journal of
  Computational and Applied Mathematics, 267 (2014), pp.~82--95.

\bibitem{wu2021new}
{\sc T.~Wu, Y.~Sun, and D.~Cheng}, {\em A new finite difference scheme for the
  3d helmholtz equation with a preconditioned iterative solver}, Applied
  Numerical Mathematics, 161 (2021), pp.~348--371.

\bibitem{wu2018optimal}
{\sc T.~Wu and R.~Xu}, {\em An optimal compact sixth-order finite difference
  scheme for the helmholtz equation}, Computers \& Mathematics with
  Applications, 75 (2018), pp.~2520--2537.

\bibitem{yovel2024block}
{\sc R.~Yovel and E.~Treister}, {\em A block-acoustic preconditioner for the
  elastic helmholtz equation}, arXiv preprint arXiv:2411.15897,  (2024).

\bibitem{yovel2024lfa}
\leavevmode\vrule height 2pt depth -1.6pt width 23pt, {\em {LFA}-tuned
  matrix-free multigrid method for the elastic {Helmholtz} equation}, SIAM
  Journal on Scientific Computing,  (2024), pp.~S1--S21.

\end{thebibliography}

\end{document}